\newtheorem{thm}{Theorem}[section]
\newtheorem{Lemma}[thm]{Lemma}
\newtheorem{cor}[thm]{Corollary}
\newtheorem{defn}[thm]{Definition}
\newtheorem{remark}[thm]{Remark}
\journal{}
\begin{document}

\title{Characterizations of Eulerian and even-face partial duals of ribbon graphs}
\author{Qingying Deng, Xian'an Jin\\
\small School of Mathematical Sciences\\[-0.8ex]
\small Xiamen University\\[-0.8ex]
\small P. R. China\\
\small\tt Email:xajin@xmu.edu.cn
}
\begin{abstract}
Huggett and Moffatt characterized all bipartite partial duals of a plane graph in terms of all-crossing directions of its medial graph. Then Metsidik and Jin characterized all Eulerian partial duals of a plane graph in terms of semi-crossing directions of its medial graph. Plane graphs are ribbon graphs with genus 0. In this paper, we shall first extend Huggett and Moffatt's result to any orientable ribbon graph and provide an example to show that it is not true for non-orientable ribbon graphs. Then we characterize all Eulerian partial duals of any ribbon graph in terms of crossing-total directions of its medial graph, which are much more simple than semi-crossing directions.
\end{abstract}
\begin{keyword}
ribbon graph\sep partial dual\sep medial graph\sep direction\sep bipartite\sep Eulerian \sep even-face
\vskip0.2cm
\MSC 05C10\sep 05C45\sep 05C75\sep 57M15
\end{keyword}
\maketitle

\section{Introduction and statements of results}
\noindent

A graph is said to be Eulerian if the degree of each of its vertices is even. A graph is said to be bipartite if it does not contain cycles of odd lengths.
A ribbon graph is a new form of the old cellularly embedded graph and the signed rotation system, which makes the concept of partial duality possible. We do not require graphs to be connected throughout the whole paper unless otherwise stated. The genus of a ribbon graph is defined to be the sum of genuses of its connected components. A plane graph is a ribbon graph with genus 0.

The geometric dual, $G^*$, of a cellularly embedded graph $G$ is a fundamental concept in graph theory and also appeared in other branches of mathematics.
To unify several Thistlethwaite's theorems \cite{Chmu,Chmu2,Lin1} on relations of Jones polynomial of virtual links in knot theory and the topological Tutte polynomial of ribbon graphs constructed from virtual links in different ways, S. Chmutov, in \cite{16},
introduced the concept of partial dual of a cellularly embedded graph in terms of ribbon graph. Roughly
speaking, a partial dual is obtained by forming the geometric dual with respect to only a subset of edges of a cellularly embedded graph (for details, see Subsection \ref{spd}).

Let $G=(V,E)$ be a ribbon graph and $A\subset E$, we denote by $G^A$ the partial dual of $G$ with respect to $A$. Note that $G^E=G^*$, $G$ and $G^*$ as surfaces have the same orientability and genuses. Although $G^A$ has the same orientablity as $G$ but the genus may change. For example, a partial dual of a plane graph may not be plane again. The following theorem is well-known, see, for example, 10.2.10 of \cite{Bon}.

\begin{thm}
Let $G$ be a plane graph. Then $G$ is Eulerian if and only if $G^*$ is bipartite.
\end{thm}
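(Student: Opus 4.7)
The plan is to route both sides of the equivalence through a common intermediate condition on $G^*$, namely that every face of $G^*$ has even length (the even-face property). The backbone is plane duality: vertices of $G$ correspond to faces of $G^*$, and under this correspondence the degree of a vertex $v$ of $G$ equals the length of the boundary walk of the face $v^* \subset S^2 \setminus G^*$. Thus $G$ is Eulerian if and only if $G^*$ is even-face, and it remains to prove the auxiliary lemma that, for a plane graph $H$, being bipartite is equivalent to every face having even length. We then apply this lemma to $H=G^*$.

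For the auxiliary lemma, the forward direction is immediate: in a bipartite graph every closed walk has even length, and each face boundary is a closed walk, hence of even length. For the converse, I would argue as follows. Suppose every face of $H$ has even length; let $C$ be any cycle of $H$. By the Jordan curve theorem, $C$ bounds a disk $D$ on the sphere, and $D$ contains a finite set $\mathcal{F}$ of faces of $H$. Each edge of $C$ appears in the boundary walk of exactly one face in $\mathcal{F}$; each edge of $H$ lying strictly inside $D$ appears (with multiplicity) in the boundary walks of the faces of $\mathcal{F}$ exactly twice. Summing boundary lengths over $\mathcal{F}$ yields
\[
\sum_{f \in \mathcal{F}} |f| \;=\; |C| \,+\, 2\,\bigl(\text{number of edges of } H \text{ strictly inside } D\bigr),
\]
so $|C| \equiv \sum_{f \in \mathcal{F}} |f| \equiv 0 \pmod 2$. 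Hence every cycle of $H$ has even length and $H$ is bipartite.

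Combining the two pieces: $G$ is Eulerian $\iff$ every vertex of $G$ has even degree $\iff$ every face of $G^*$ has even length $\iff$ $G^*$ is bipartite.

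The only delicate point is the parity argument in the converse of the auxiliary lemma, because $H$ need not be 2-connected and face boundary walks may repeat edges or use bridges; the argument must count edge occurrences in boundary walks rather than edges in the naive set-theoretic sense. Once that bookkeeping is handled by the \emph{with multiplicity} count above, the rest reduces to direct translation across plane duality.
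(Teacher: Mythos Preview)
The paper does not supply its own proof of this statement; it is quoted as a well-known fact with a reference to Bondy and Murty (10.2.10 of \cite{Bon}). Your argument is correct and is the standard one. It also matches the paper's conceptual framework: the paper explicitly replaces ``bipartite'' by ``even-face'' when generalising from plane graphs to arbitrary ribbon graphs (see the discussion preceding Theorem~\ref{mainn} and Lemma~\ref{111}), which is precisely the intermediate condition you route through.
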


In \cite{HM}, Huggett and Moffatt extended the above connection between Eulerian and bipartite plane graphs from geometrical duality to partial duality.
Note that a bipartite ribbon graph is always an even-face one but the converse is not always true but holds for plane graphs. We shall use even-face to replace bipartite and then we shall have: for a ribbon graph $G$, $G$ is Eulerian if and only if $G^*$ is an even-face graph. Extend it to partial duality, we obtain:

\begin{thm}\label{mainn}
Let $G$ be a ribbon graph, $A\subset E(G)$ and $A^c=E(G)-A$. Then $G^{A}$ is an even-face graph
if and only if $G^{A^c}$ is Eulerian.
\end{thm}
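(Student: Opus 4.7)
The plan is to reduce the statement to two well-known facts about ribbon graphs: (i) partial duals compose via symmetric difference, $(G^A)^B = G^{A \triangle B}$, so in particular $(G^A)^* = G^{A \triangle E(G)} = G^{A^c}$; and (ii) for any ribbon graph $H$, vertices of $H$ are in canonical, degree-preserving bijection with faces of $H^*$, which immediately gives that $H$ is Eulerian if and only if $H^*$ is an even-face graph. This second fact is essentially the observation the authors make in the paragraph preceding the theorem, applied to an arbitrary ribbon graph rather than to $G$ itself.

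First I would set $H := G^A$ and identify $E(H)$ with $E(G)$ in the standard way (partial duality does not change the edge set). Using the composition law for partial duality with $B = E(H)$, I would conclude $H^* = (G^A)^{E(G)} = G^{A \triangle E(G)} = G^{A^c}$. Then I would invoke fact (ii) in the contrapositive form: applying ``$H$ Eulerian $\iff$ $H^*$ even-face'' to the ribbon graph $H^*$ (and using $(H^*)^* = H$) gives $H$ even-face $\iff$ $H^*$ Eulerian. Substituting $H = G^A$ and $H^* = G^{A^c}$ yields exactly the theorem: $G^A$ is an even-face graph if and only if $G^{A^c}$ is Eulerian.

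The argument does not require any direct combinatorial analysis of which edges lie in $A$ versus $A^c$; all the work is pushed into the two background facts. The main obstacle is therefore just making sure these background facts are stated or cited cleanly. Fact (i) is a standard property of partial duality going back to Chmutov~\cite{16}. Fact (ii) requires only the observation that, in the ribbon graph model, geometric duality swaps the roles of vertices and faces while preserving the cyclic sequence of edge ends bounding each, hence preserves degrees; this works uniformly in the orientable and non-orientable cases, so no extra hypothesis on $G$ is needed. Once these are in place, the proof is essentially a one-line symbolic manipulation.
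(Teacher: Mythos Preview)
Your proposal is correct and follows essentially the same route as the paper: the paper's proof of this theorem (stated again as Theorem~\ref{main}) also first uses the composition law from Theorem~\ref{pro0} to get $G^{A^c}=(G^A)^*$, and then appeals to the degree-preserving bijection between boundary components of a ribbon graph and vertices of its geometric dual (the paper packages this as Lemma~\ref{seg}(4)) to conclude. Your explicit use of $(H^*)^*=H$ to flip the equivalence is a harmless cosmetic variation.
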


In \cite{HM}, Huggett and Moffatt further determined which subsets of edges in a plane graph give rise to bipartite partial duals. In this paper, we extend their result  from plane graphs to orientable ribbon graphs and obtain the first main result.

\begin{thm}\label{main00}
Let $G$ be an orientable ribbon graph and $A\subset E(G)$. Then $G^{A}$ is bipartite
if and only if $A$ is the set of $c$-edges arising from an all-crossing direction
of $G_m$, the medial graph of $G$.
\end{thm}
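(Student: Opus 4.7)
\medskip
\noindent\textbf{Proof plan.} The plan is to adapt Huggett and Moffatt's argument for the plane case, exploiting the fact that the medial graph is a partial-duality invariant: as an embedded $4$-regular graph on the underlying surface, $(G^A)_m=G_m$ for every $A\subseteq E(G)$. Consequently an all-crossing direction of $G_m$ is simultaneously combinatorial data about every partial dual of $G$. I will first recall the setup: at each vertex $v$ of $G_m$, corresponding to an edge $e$ of $G$, the four incident $G_m$-edges split into two \emph{transverse pairs} by the ribbon rotation, and an all-crossing direction is an orientation in which one transverse pair is incoming at $v$ and the other outgoing. The edge $e$ is a $c$-edge precisely when the direction separates its two half-edges, i.e., one half-edge lies in the incoming pair and the other in the outgoing pair.

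For the sufficiency direction, suppose $A$ is the set of $c$-edges of some all-crossing direction of $G_m$. The transverse pairs glue the edges of $G_m$ into disjoint closed \emph{straight-ahead walks}, which the all-crossing direction orients coherently; viewed on the underlying surface these walks form oriented simple closed curves. Using orientability I assign a consistent ``left'' side along each curve, and use this data to produce a $2$-colouring of the vertex-faces of $(G^A)_m=G_m$, i.e.\ of the vertices of $G^A$. Checking, via the standard band-presentation description of partial duality, that the $c$-edge condition forces every edge of $G^A$ to join vertex-faces of opposite colours will show that $G^A$ is bipartite.

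For the necessity direction, suppose $G^A$ is bipartite and fix a $2$-colouring $V(G^A)=B\cup W$. Each edge of $G_m=(G^A)_m$ separates one vertex-face of $(G^A)_m$ (a black or white vertex of $G^A$) from one face-face. I would use this colour to orient each edge of $G_m$ by a fixed convention (e.g.\ so that, when the incident vertex-face is black, it lies on the left of the oriented edge), then verify that the resulting orientation is globally an all-crossing direction of $G_m$ whose $c$-edge set is exactly $A$.

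The main obstacle will be the orientability step in the sufficiency argument: one must make a consistent ``left'' choice along every straight-ahead walk, and this is exactly what can fail on a non-orientable ribbon graph, where a walk may reverse its transverse orientation after traversing a one-sided loop. This obstruction is precisely the phenomenon the promised non-orientable counter-example must exhibit.
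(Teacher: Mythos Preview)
Your overall intuition---transport a $2$-colouring of $V(G^{A})$ to an orientation of $E(G_m)$ (and back) via the left/right distinction that an oriented surface provides---is exactly the mechanism the paper uses. But the proof you propose rests on a claim that is false, and both directions collapse without it.

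The assertion ``as an embedded $4$-regular graph on the underlying surface, $(G^{A})_m=G_m$ for every $A\subseteq E(G)$'' is not correct. Partial duality can change the genus: for instance, if $G$ is the plane theta graph and $A$ consists of a single edge, then $G^{A}$ has genus~$1$. Since $G_m$ is cellularly embedded in the surface of $G$ and $(G^{A})_m$ in the surface of $G^{A}$, the two medial graphs live on non-homeomorphic surfaces and cannot coincide as embedded graphs. In particular the ``vertex-faces of $(G^{A})_m$'' are \emph{not} faces of $G_m$, so your necessity step (``each edge of $G_m=(G^{A})_m$ separates a vertex-face of $(G^{A})_m$ from a face-face'') and your sufficiency step (``$2$-colour the vertex-faces of $(G^{A})_m=G_m$'') both fail as written. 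A secondary issue: your description of a $c$-edge in terms of ``its two half-edges'' does not match the paper's definition, which is about how the in/out transverse pair sits relative to the canonical black/white faces at $v(e)$.

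The paper replaces the false medial-graph invariance by a correct bridge between $V(G^{A})$ and objects that genuinely live in $G$ and in $G_m$: the vertices of $G^{A}$ are identified with the circles of the arrow presentation of $G^{A}$, equivalently with the state circles of the state $S_A$ of $G_m$ (white-smooth on $A$, black-smooth elsewhere); see Lemma~\ref{seg} and Lemma~\ref{l1}. For sufficiency one labels each edge of $G_m$ by $x$ or $y$ according to whether the black face lies to its right or left along the given all-crossing direction (this is the step that uses orientability of $G$), and checks that every state circle of $S_A$ is monochromatic, giving the bipartition of $V(G^{A})$. For necessity one pushes the bipartition $(X,Y)$ of $V(G^{A})$ down to an $X/Y$-labelling of the vertex line segments of $G$, shows these labels alternate on the circles of the arrow presentation of $G^{A^{c}}$, and then invokes orientability through Lemma~\ref{pro1} (an orientable ribbon graph admits an arrow presentation with all arrows on each circle coherently directed) to turn the alternating labels into an all-crossing direction of $G_m$ whose $c$-edge set is exactly $A$. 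Your plan is salvageable, but the repair is substantive: you must trade the spurious identity $(G^{A})_m=G_m$ for the state-circle/arrow-presentation description of $V(G^{A})$.
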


In \cite{HM}, Huggett and Moffatt also found a sufficient condition for a subset of edges to give rise to an Eulerian
partial dual in terms of all-crossing directions of medial graphs. In \cite{MJ}, Metsidik and Jin characterized all Eulerian partial duals of a plane graph in terms of semi-crossing directions of its medial graph. In this paper, we find more simple directions, i.e. crossing-total directions, of medial graph to characterize all Eulerian partial duals of any ribbon graph and obtain our second main result.

\begin{thm}\label{main12}
Let $G$ be a ribbon graph and $A\subset E(G)$. Then $G^{A}$ is Eulerian
if and only if $A$ is union of the set of all $d$-edges and the set of some $t$-edges arising from a crossing-total direction
of $G_m$.
\end{thm}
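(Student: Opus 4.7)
The plan is to reduce the statement to a characterization of even-face partial duals via Theorem~\ref{mainn}. Setting $B := E(G)\setminus A$, the theorem says $G^A$ is Eulerian iff $G^B$ is even-face. Under a crossing-total direction of $G_m$ each vertex of $G_m$ (hence each edge of $G$) is of type $c$, $d$, or $t$, and the complement of ``all $d$-edges plus some $t$-edges'' is ``all $c$-edges plus the remaining $t$-edges.'' So the theorem reduces to proving: $G^B$ is an even-face ribbon graph iff $B$ consists of all $c$-edges together with some $t$-edges for some crossing-total direction of $G_m$.

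To prove this equivalence I would use the standard correspondence between faces of a partial dual $G^B$ and closed trails in the medial graph. At each $4$-valent vertex $v$ of $G_m$, corresponding to an edge $e \in E(G)$, the four incident edges admit two possible ``transition pairings''; the choice of whether $e \in B$ selects one of the two pairings, and tracing the selected pairings globally yields the boundary walks of the faces of $G^B$. Encoding this choice by an orientation of the four half-edges at $v$ defines a direction on $G_m$. I would then classify the local in/out patterns at a $4$-valent vertex into the admissible crossing-total types $c$ (crossing), $d$, and $t$ (total, i.e.\ sink/source), and compute how each type contributes to the parity of faces passing through it. A case analysis should show that every face of $G^B$ has even length precisely when the induced direction is crossing-total, all $c$-edges lie in $B$, all $d$-edges lie outside $B$, and $t$-edges are unconstrained. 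Complementing yields the desired description of $A$.

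The main obstacle I anticipate is the non-orientable case, since Theorem~\ref{main00} explicitly fails there and a global direction does not extend naively across half-twists. The saving grace is that the even-face property is local rather than global: it depends only on the length of each face boundary, not on a global $2$-coloring as bipartiteness does, so the vertex-by-vertex parity accounting at vertices of $G_m$ should remain valid once the transition pairings are defined coherently around each twisted ribbon. A secondary issue is handling loops and multi-edges in $G_m$, which complicates the bookkeeping but should not affect the core parity argument.
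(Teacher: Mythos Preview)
Your reduction via Theorem~\ref{mainn} is correct but circuitous: since the faces of $G^{A^c}$ are in bijection with the vertices of $G^A$, the even-face condition on $G^{A^c}$ is literally the Eulerian condition on $G^A$, and both translate to the same statement that the state $S_A$ (white-smooth on $A$, black-smooth on $A^c$) has all circles of even length. The paper works directly with this via Lemma~\ref{l2} ($G^A$ is Eulerian iff $S_A$ is even) and only afterwards deduces the even-face characterization as Corollary~\ref{main2}; you have reversed the order for no gain.

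From that point the paper's argument is the one you are reaching for but do not state cleanly. The smoothing choice at each vertex of $G_m$ does \emph{not} by itself encode a direction on $G_m$; rather, once the state circles of $S_A$ are known to be even, one orients each circle alternately (in, out, in, out, \ldots), and pushing these half-edge orientations back to $G_m$ yields a crossing-total direction in which every edge of $A$ is a $d$- or $t$-edge and every edge of $A^c$ is a $c$- or $t$-edge. Conversely, any crossing-total direction restricts to an alternating orientation on each circle of $S_A$, forcing even length. Your proposed ``case analysis of how each type contributes to parity'' is this observation, but the direction has to come from the alternating orientation of even circles, not from the smoothing choice itself; as written, that step in your plan is a gap.

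Your concern about non-orientability is unfounded here: the argument is entirely about lengths of state circles and never invokes a global $2$-colouring or a coherent surface orientation, which is precisely why Theorem~\ref{main12} holds for all ribbon graphs while Theorem~\ref{main00} does not.
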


For details of crossing-total directions, see Subsection 3.1. For terminologies and notations not defined in this paper, we refer the reader to \cite{Bon}.

\section{Ribbon graphs and their partial duals}
\noindent

The majority of the content of this section can be found in the monograph \cite{Mo}. Besides basic definitions, we also present some preliminary old or new results.

\subsection{Cellularly embedded graphs and ribbon graphs}

Let $\Sigma$ be a closed surface. A cellularly embedded graph $G=(V(G),E(G))$ $\subset$ $\Sigma$ is a graph drawn on $\Sigma$
in such a way that edges only intersect at their ends and such that each connected
component of $\Sigma \backslash G$ is homeomorphic to an open disc. If $G$ is cellularly
embedded in $\Sigma$, the connected components of $\Sigma \backslash G$ along with their boundaries in $\Sigma$ are called faces of $G$.
Two cellularly embedded graphs $G \subset \Sigma$ and $G' \subset \Sigma'$ are equivalent, written $G=G'$, if there is
a homeomorphism (which is orientation preserving when $\Sigma$ is orientable) from $\Sigma$ to $\Sigma$ that sends $G$ to $G'$. We shall consider cellularly embedded graphs under equivalence.

There are several concepts which are equivalent to cellularly embedded graphs. Among them, ribbon graphs have advantages over others, and one advantage is to define partial duality.

\begin{defn}\label{de1}
A ribbon graph $G=(V(G),E(G))$ is a (possibly nonorientable)
surface with boundary represented as the union of two sets of
topological discs: a set $V(G)$ of vertices and a set $E(G)$ of edges such that:
\begin{itemize}
\item the vertices and edges intersect in disjoint line segments;
\item each such line segment lies on the boundary of precisely one vertex and
precisely one edge;
\item every edge contains exactly two such line segments.
\end{itemize}
\end{defn}

We shall call these line segments in Definition \ref{de1} common line segments \cite{Metrose} since they belong to boundaries of both vertex discs and edge discs.

Intuitively, a ribbon graph arises naturally from a small neighbourhood
of a cellularly embedded graph $G$ in $\Sigma$. Neighbourhoods of vertices of $G$ form the vertices of the ribbon graph,
and neighbourhoods of the edges of $G$ form the edges of the ribbon graph. Conversely, if $G$ is a ribbon graph, we simply sew discs (corresponding to faces) into boundary components of the ribbon graph to get a closed surface $\Sigma$ and the core of $G$ is cellularly embedded in $\Sigma$. Two ribbon graphs are equivalent if they define equivalent cellularly
embedded graphs. Since ribbon graphs and cellularly embedded graphs are equivalent we can, and will, move freely between them.

Let $G$ be (an abstract or a ribbon) graph and $v\in V(G)$. We denote by $d_G(v)$ the degree of $v$ in $G$, i.e. the number of half-edges incident with $v$.

\begin{defn}\cite{Metrose}
Let $G$ be ribbon graph, $v\in V(G)$ and $e\in E(G)$. By deleting the common line segments from the boundary of $v$, we obtain $d_G(v)$ disjoint line segments, we shall call them vertex line segments. By deleting common line segments from the boundary of $e$, we obtain two disjoint line segments, we shall call them edge line segments.
\end{defn}

On a boundary component of a ribbon graph, vertex line segments and edge line segments alternate. The number of vertex line segments (equivalently, edge line segments) on a boundary component of a ribbon graph is called the degree of the boundary component. As an example, in Figure \ref{ls}a, all $v_{ij}$'s are vertex line segments, all $e_{ij}$'s are edge line segments and all $e'_{ij}$'s are common line segments.
$v_{11}e_{32}v_{23}e_{21}v_{31}e_{12}v_{22}e_{31}v_{12}e_{52}$ is a boundary component of degree 5. A ribbon graph is said to be even-face if each of its boundary components is of even degree. Note that the degree of a face of a cellularly embedded graph $G$ is exactly the degree of the corresponding boundary component of the ribbon graph equivalent to $G$. A ribbon graph is even-face if and only if its corresponding cellularly embedded graph is even-face, i.e. each face is of even degree.
The following lemma is obvious.

\begin{Lemma}\label{111}
Let $G$ be a ribbon graph. If $G$ is bipartite, then $G$ is even-face graph.
\end{Lemma}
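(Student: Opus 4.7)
The plan is to trace each boundary component of $G$ and extract from it a closed walk in the underlying graph whose length equals the degree of that component; the bipartite hypothesis then forces the degree to be even. Since $G$ is bipartite, I would first fix a proper $2$-coloring $c:V(G)\to\{0,1\}$; in particular $G$ has no loops.

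Next, pick an arbitrary boundary component $B$ of degree $d$. Along $B$ the vertex line segments and edge line segments alternate, and $d$ of each appear. Let $v_1,v_2,\dots,v_d$ be the vertices whose vertex discs host the successive vertex line segments, read cyclically along $B$. For each $i$, the unique edge line segment between the $i$-th and $(i+1)$-st vertex line segments lies on a single edge disc, and the two common line segments on the boundary of that edge disc sit on the vertex discs of $v_i$ and $v_{i+1}$; hence $v_iv_{i+1}\in E(G)$. Thus $v_1v_2\cdots v_dv_1$ is a closed walk of length $d$ in $G$.

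By the $2$-coloring we have $c(v_{i+1})\neq c(v_i)$ for every $i$ modulo $d$, so the colors flip exactly $d$ times along this closed walk. For the walk to close up consistently, $d$ must be even. Since $B$ was arbitrary, every boundary component of $G$ has even degree, i.e.\ $G$ is an even-face ribbon graph.

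The one point I would verify most carefully is the identification of the closed walk, namely that the edge disc squeezed between two consecutive vertex line segments along $B$ really corresponds to an edge of the underlying graph joining the hosting vertex discs, even in the presence of parallel edges or of repeated visits to a single vertex. This is immediate from the ribbon graph data (each edge disc has exactly two common line segments, each lying on precisely one vertex disc), after which the rest of the argument is simply the standard fact that a bipartite graph has no closed walks of odd length.
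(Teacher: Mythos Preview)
Your argument is correct: tracing a boundary component yields a closed walk in the underlying abstract graph of the same length as the degree of that component, and the bipartite hypothesis forces this length to be even. The paper does not actually prove this lemma---it simply declares it ``obvious''---so your write-up is precisely the natural way to fill in that gap, and the point you flag for extra care (that the edge disc between two consecutive vertex line segments joins the corresponding vertices) follows directly from the incidence structure in Definition~\ref{de1}.
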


However, the inverse of Lemma \ref{111} is not true. For example, the graph consisting of longitude and latitude circles in a torus (one vertex and two loops) is even-face, but not bipartite.
Sometimes it is more convenient to describe ribbon graphs using Chmutov's arrow
presentations from \cite{16}.
\begin{defn}
An arrow presentation consists of a set of circles with pairs of labelled arrows, called marking arrows, on them such that there are exactly two marking arrows of each label.
\end{defn}

Note that circles correspond to boundaries of vertex discs and each pair of marked arrows with the same label corresponds to an edge (directions of two arrows are consistent with the orientation of the boundary of the edge).
An example of a ribbon graph and its arrow representation is given in Figure \ref{rib}. Arrow presentations are equivalent if they describe equivalent
ribbon graphs and are considered up to this equivalence. For example, reversing the direction
of both arrows with a given label yields an equivalent arrow presentation.

\begin{figure}[!htbp]
\centering
\includegraphics[width=4.0in]{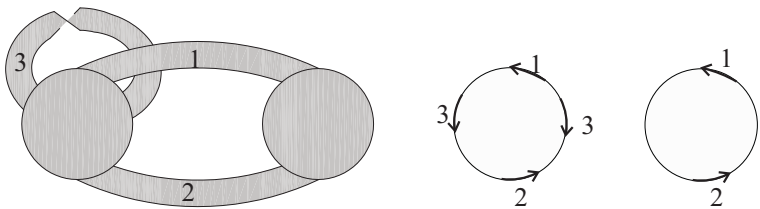}
\caption{{\footnotesize A ribbon graph and its arrow presentation.}}\label{rib}
\end{figure}

A ribbon graph $G$ can be recovered from an arrow presentation by identifying
each circle with the boundary of a disc (forming the vertex of $G$). Then, for each
pair of $e$-labeled arrows, take a disc (which will form an edge of $G$), orient its boundary,
place two disjoint arrows on its boundary that point in the direction of the orientation,
and identify each of these with an $e$-labeled arrow.

We emphasise that the circles in an arrow presentation are not equipped with any
embedding in the plane or $\mathbb{R}^3$.

\subsection{Partial dual}\label{spd}
\noindent

\begin{defn}\cite{16}\label{dpd}
Let $G$ be a ribbon graph and $A\subset E(G)$. Arbitrarily
orient and label each of the edges of $G$ (the orientation need not extend to an
orientation of the ribbon graph). The boundary components of the spanning ribbon
subgraph $(V(G),A)$ of $G$ meet the edges of $G$ in disjoint arcs (where the spanning
ribbon subgraph is naturally embedded in $G$). On each of these arcs, place an
arrow which points in the direction of the orientation of the edge boundary and
is labelled by the edge it meets. The resulting marked boundary components of the
spanning ribbon subgraph $(V(G),A)$ define an arrow presentation. The ribbon graph
arising from this arrow presentation is the partial dual $G^A$ of $G$ with respect to $A$.
\end{defn}

Figure \ref{pd} provides an example of the construction of a partial dual
$G^{A}$ of a ribbon graph $G$ in Figure \ref{pd} (a) with $A=\{1,2,5\}$ using Definition \ref{dpd}. Figure \ref{pd} (b) shows the spanning subgraph $(V(G),\{1,2,5\})$ with boundary
components marked. The arrow presentation of $G^{A}$ is shown in Figure \ref{pd} (c). The ribbon graph of the partial
dual $G^{A}$ is shown in Figure \ref{pd} (d).

\begin{figure}
\centering
\begin{minipage}{7cm}
\includegraphics[width=2.5in]{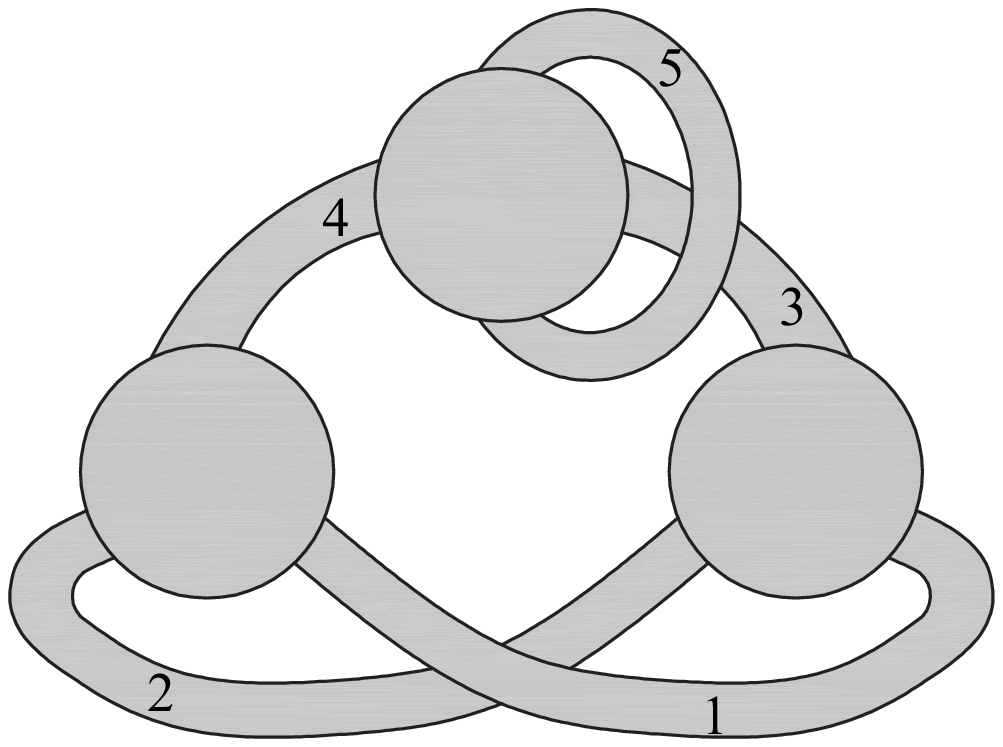}
\centerline{(a)}
\end{minipage}\begin{minipage}{7cm}
\includegraphics[width=2.5in]{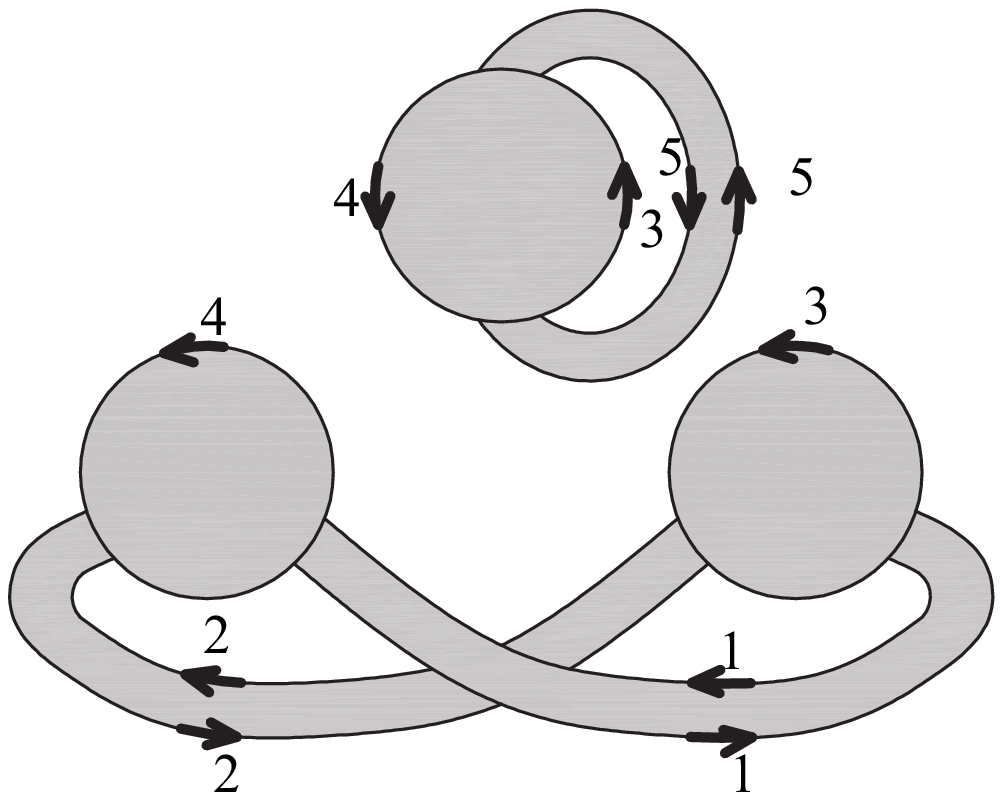}
\centerline{(b)}
\end{minipage}
\begin{minipage}{7cm}
\includegraphics[width=2.5in]{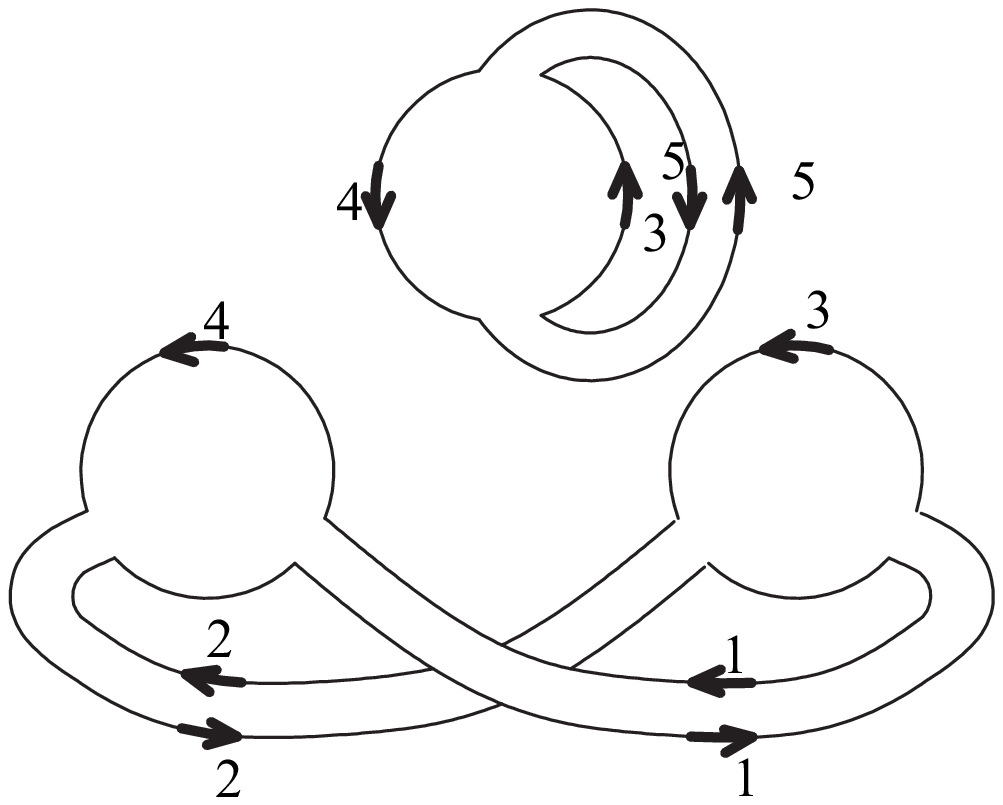}
\centerline{(c)}
\end{minipage}\begin{minipage}{7cm}
\includegraphics[width=2.5in]{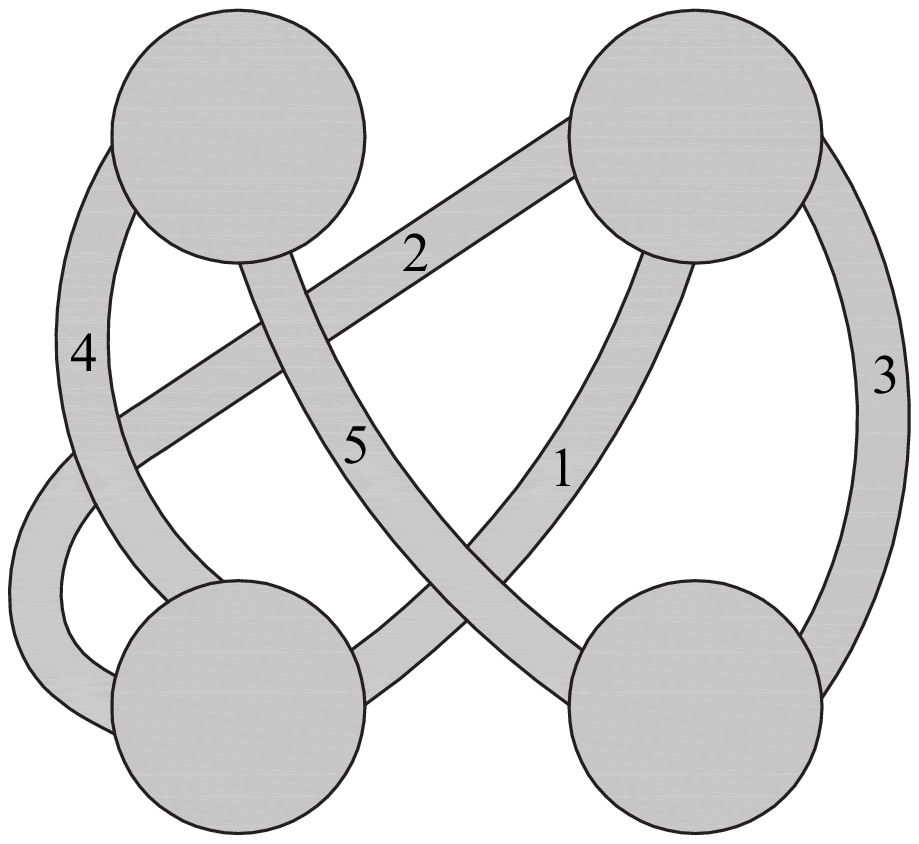}
\centerline{(d)}
\end{minipage}
\caption{An example of the construction of a partial dual: (a) a ribbon graph $G$, (b) the marked spanning subgraph $(V(G),A)$ with $A=\{1,2,5\}$, (c) the arrow presentation of $G^A$, and (d) the ribbon graph form of $G^{A}$.}\label{pd}
\end{figure}

\begin{Lemma}\label{pro1}
Let $G$ be an orientable ribbon graph. Then $G$ has an arrow presentation such that all marking arrows in the same circle have the same directions.
\end{Lemma}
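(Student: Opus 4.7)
The plan is to produce the desired arrow presentation directly from a global orientation of $G$ viewed as a surface with boundary. Since $G$ is orientable, fix such a global orientation once and for all. This choice induces, on the boundary of each vertex disc and each edge disc, a coherent ``positive'' direction (the one given by, say, the right-hand rule with respect to the global orientation of the surface).

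Next, I would build the arrow presentation from these induced orientations. For each vertex $v$, take the circle representing $v$ in the arrow presentation to be $\partial v$, equipped with its induced orientation. For each edge $e$, use the induced boundary orientation on the edge disc to determine the two marking arrows labelled $e$: at each of the two edge line segments of $e$, place an arrow pointing along the edge-boundary orientation, and record this arrow on the corresponding vertex circle at the common line segment where $e$ meets the vertex.

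The heart of the argument is a local observation at each common line segment. If $\ell$ is a common line segment shared by a vertex disc $v$ and an edge disc $e$, then the boundary orientations of $v$ and $e$ induced by the single global orientation of the surface traverse $\ell$ in \emph{opposite} directions; this is the standard fact that when two coherently oriented $2$-cells are glued along an arc, the arc appears in the two boundaries with opposite signs, so that the arc cancels in the boundary of the union. Applying this to every common line segment along a fixed vertex $v$ shows that, as one traverses $\partial v$ in its induced orientation, every marking arrow on the circle points against this traversal. In particular, all marking arrows on the circle of $v$ point in the same direction relative to the circle, which is the assertion of the lemma.

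The main thing to watch out for is conventions: one must fix what ``the same direction on a circle'' means and verify that the opposite-orientation statement at a common line segment really follows from having a \emph{single} coherent surface orientation rather than being an extra hypothesis. Once the convention is pinned down, the argument is entirely local at each common line segment and no further case analysis is needed; in particular, no induction on the number of edges or vertices is required.
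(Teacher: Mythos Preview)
Your argument is correct and somewhat more intrinsic than the paper's. The paper's proof is a one-sentence pictorial remark: since $G$ is orientable it can be drawn in the plane with no twisted edge ribbons, and then Definition~\ref{dpd} (applied with $A=\emptyset$, so that the circles are simply the vertex boundaries) visibly yields an arrow presentation in which every marking arrow runs, say, counterclockwise around its vertex circle. You instead work directly with the abstract global orientation of the surface and invoke the standard fact that the induced boundary orientations of two coherently oriented $2$-cells cancel along a glued arc; this avoids any appeal to a planar immersion and makes explicit why orientability is precisely the hypothesis that is needed, at the modest cost of being slightly less immediate to visualise. One small wording slip: the marking arrows of an arrow presentation sit at the \emph{common} line segments of each edge, not at the edge line segments; since your key local cancellation observation is already formulated at the common line segments, this does not affect the argument.
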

\begin{proof}
When $G$ is orientable, we can draw it in the plane such that all vertex discs and edge discs have no twists. By Definition \ref{dpd} we can get an arrow presentation such that all marking arrows in the same circle have the same directions.
\end{proof}

Form the viewpoint of surfaces, the partial dual $G^A$ can be obtained from $G$ by gluing a disc to $G$ along each boundary component of $(V(G), A)$ and removing the interior of all vertices of $G$.

\begin{thm}\cite{16}\label{pro0}
Let $G$ be a ribbon graph and $A\subset E(G)$. Then
\begin{enumerate}
\item[(1)] $G^{\emptyset}=G$;
\item[(2)] $G^{E(G)}=G^*$, the geometric dual of $G$;
\item[(3)] $(G^A)^B=G^{A\bigtriangleup B}$, where $B\subset E(G)$ and $\bigtriangleup$ denotes the symmetric difference;
\item[(4)] $G$ is orientable if and only if $G^{A}$ is orientable.
\end{enumerate}
\end{thm}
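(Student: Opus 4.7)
The plan is to verify the four assertions in turn, working directly from Definition \ref{dpd}.

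For (1), when $A=\emptyset$, the spanning ribbon subgraph $(V(G),\emptyset)$ consists only of the vertex discs of $G$. Its boundary components are the boundaries of these discs, and the arcs at which each boundary meets the edges of $G$ are precisely the common line segments. Marking these arcs with arrows following any chosen edge orientation reproduces the natural arrow presentation of $G$ itself, so $G^{\emptyset}=G$.

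For (2), when $A=E(G)$, the spanning ribbon subgraph is $G$ itself, whose boundary components correspond to the faces of the cellular embedding determined by $G$. The construction replaces each such boundary component by a new vertex disc, with edges passing through the same ribbons as before. The result has one vertex per face of $G$ and one edge per edge of $G$, joined in the pattern of face-adjacency; this is exactly the geometric dual $G^*$.

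For (3), which is the most substantive part, the plan is to use the surface-theoretic description of partial duality: $G^A$ is obtained from $G$ by gluing a disc along each boundary component of $(V(G),A)$ and then removing the interior of every original vertex disc. The edges of $G^A$ are then in canonical bijection with the edges of $G$. Iterating, $(G^A)^B$ is built by gluing further discs along the boundary components of $(V(G^A),B)$ and deleting the interiors of the vertices of $G^A$. The idea is to track, for each edge $e\in E(G)$, whether its two sides end up attached to ``old'' vertices (of $G$) or to ``new'' vertices arising from disc-gluings. The first partial duality toggles this status exactly for $e\in A$, and the second toggles it again exactly for $e\in B$; hence the overall status of $e$ is flipped if and only if $e\in A\bigtriangleup B$. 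A careful matching of boundary components then identifies $(G^A)^B$ with $G^{A\bigtriangleup B}$; one can alternatively carry out the same bookkeeping in terms of arrow presentations.

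For (4), orientability of a ribbon graph is equivalent to orientability of the closed surface obtained by capping off its boundary components with discs. The partial duality construction consists only of disc-gluings along boundary circles and interior removals, neither of which can change the orientability of the ambient surface; hence $G$ is orientable if and only if $G^A$ is. The main obstacle will be making the edge-by-edge tracking in (3) rigorous: a fully clean proof requires either a careful combinatorial verification with arrow presentations, or a careful topological analysis of how boundary components of $(V(G),A)$ and $(V(G^A),B)$ match up with those of $(V(G),A\bigtriangleup B)$.
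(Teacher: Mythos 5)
The paper does not prove this theorem at all: it is quoted from Chmutov's paper \cite{16}, so there is no internal proof to compare your attempt against. Judged on its own, your treatments of (1), (2) and (4) are sound sketches (for (4), the cleanest way to get the reverse implication is via $(G^A)^A=G$, i.e.\ via part (3), though your direct argument can also be made to work in both directions).

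The genuine gap is in part (3), which you yourself flag as the substantive one. First, the invariant you propose to toggle --- whether the two sides of an edge are attached to ``old'' vertices of $G$ or to ``new'' vertices arising from disc-gluings --- is not well defined after the first dualization: \emph{every} vertex of $G^A$ is a new disc glued along a boundary component of $(V(G),A)$, and all old vertex interiors are deleted, so there is no old/new dichotomy left to toggle at the second step. The correct bookkeeping is the one recorded in Lemma \ref{seg} of this paper: for each edge one tracks whether its \emph{common} line segments or its \emph{edge} line segments lie on the vertex-disc boundaries, and passing to $G^A$ exchanges these exactly for $e\in A$ and fixes them for $e\notin A$; iterating gives the exchange precisely on $A\bigtriangleup B$. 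Second, even with the right edge-by-edge toggle, you must still show that these line segments are traversed in the same cyclic order, i.e.\ that the boundary components of $(V(G^A),B)$ inside $G^A$ trace out the same marked circles as the boundary components of $(V(G),A\bigtriangleup B)$ inside $G$. This global matching of arrow presentations is where the actual work lies, and your plan only promises it. The standard route is to reduce to single edges --- prove $(G^A)^{\{e\}}=G^{A\bigtriangleup\{e\}}$ by a local analysis of the boundary arcs near $e$, then induct over the edges of $B$ --- which avoids comparing the two iterated constructions wholesale.
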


The following lemma is no more than an observation.

\begin{Lemma} \label{seg} Let $G$ be a ribbon graph and $A\subset E(G)$. Let $G^A$ be the partial dual of $G$ with respect to $A$. Then
\begin{enumerate}
\item[(1)] there is a natural bijection between vertex line segments of $G$ and $G^A$; common line segments and edge line segments of edges inside $A$ exchange after taking the dual; and common line segments and edge line segments of edges outside $A$ keep unchanged after taking the dual.
\item[(2)] the boundary components of $G$ consist of vertex line segments and edge line segments; the boundary component of $G^A$ consist of vertex line segments, common line segments of edges inside $A$ and edge line segments of edges outside $A$.
\item[(3)] the boundary components of vertex discs of $G$ consist of vertex line segments and common line segments; the boundary components of vertex discs of $G^A$ (i.e. circles of the arrow presentation of $G^A$) consist of vertex line segments, common line segments of edges outside $A$ and edge line segments of edges inside $A$.
\item[(4)] there is a natural bijection between boundary components of $G$ and boundary components of $(V(G^A),A^c)$.
\end{enumerate}
\end{Lemma}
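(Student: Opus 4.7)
The plan is to view all four parts as direct consequences of tracking the three kinds of line segments (vertex, common, edge) through the arrow-presentation construction of the partial dual given in Definition \ref{dpd}; no extra machinery is needed. The starting observation is a careful description of the spanning ribbon subgraph: the boundary of $(V(G),A)$, traversed as a union of circles on the ribbon surface, alternates vertex line segments of $G$, edge line segments of edges in $A$ (since those edges contribute their two edge line segments to the boundary of the subgraph), and common line segments of edges outside $A$ (since those common line segments now lie on the boundary of a vertex disc that is no longer glued to an edge disc).

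For Part (1), I would use the fact that by Definition \ref{dpd} the circles of the arrow presentation of $G^A$ are exactly the boundary components of $(V(G),A)$, with an arrow placed at each arc where such a boundary meets an edge of $G$. The vertex line segments of $G$ appear unchanged on these circles and thus are naturally identified with the vertex line segments of $G^A$. The arcs where arrows are placed are, as just noted, the edge line segments of edges in $A$ together with the common line segments of edges outside $A$; under the reconstruction of $G^A$ from its arrow presentation these arcs become precisely the common line segments of $G^A$. The remaining pieces of each edge disc of $G^A$ (i.e. its edge line segments) therefore correspond to the complementary pieces in $G$: the common line segments of edges in $A$ and the edge line segments of edges outside $A$. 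This is exactly the three assertions in (1).

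Parts (2) and (3) are then bookkeeping. From the definition of a ribbon graph, the boundary components of any ribbon graph alternate vertex line segments with edge line segments (common line segments are interior), and the boundary components of each vertex disc alternate vertex line segments with common line segments. Applying these two facts to $G^A$ and then rewriting the segments of $G^A$ in terms of segments of $G$ via Part (1) yields the explicit descriptions in (2) and (3).

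For Part (4), I would apply the subgraph-boundary analysis used at the start of Part (1), but now to the spanning subgraph $(V(G^A),A^c)$ of $G^A$: its boundary components consist of vertex line segments of $G^A$, edge line segments of $G^A$ of edges in $A^c$, and common line segments of $G^A$ of edges in $A$. Translating each of these three types back to $G$-segments via Part (1) gives vertex line segments of $G$ together with edge line segments of all edges of $G$ — exactly the boundary of $G$. The induced identification of arcs then provides the natural bijection between boundary components of $G$ and of $(V(G^A),A^c)$. There is no real obstacle in this lemma; the only thing to be careful about is not conflating which role each of the three kinds of line segments plays in $G$ versus in $G^A$, and keeping that dictionary straight is the entire content of the proof.
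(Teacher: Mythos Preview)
Your proposal is correct and is essentially the formal version of what the paper does: the paper states that this lemma ``is no more than an observation'' and provides only an illustrative example via Figure~\ref{ls}, whereas you have written out explicitly the segment-tracking argument that underlies that observation. The content is the same---identifying the boundary of $(V(G),A)$ with the circles of the arrow presentation of $G^A$ and reading off which of the three types of line segments land where---so there is no meaningful methodological difference.
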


We use an example in Figure \ref{ls} to further illustrate the lemma. For (1), see (a) and (d). (b) and (c) are intermediate steps from (a) to (d). For (2), see (a) and (e). For (3), see (a) and (d) again. For (4), see (a) and (f).

\begin{figure}
\centering
\begin{minipage}{5cm}

\includegraphics[width=1.8in]{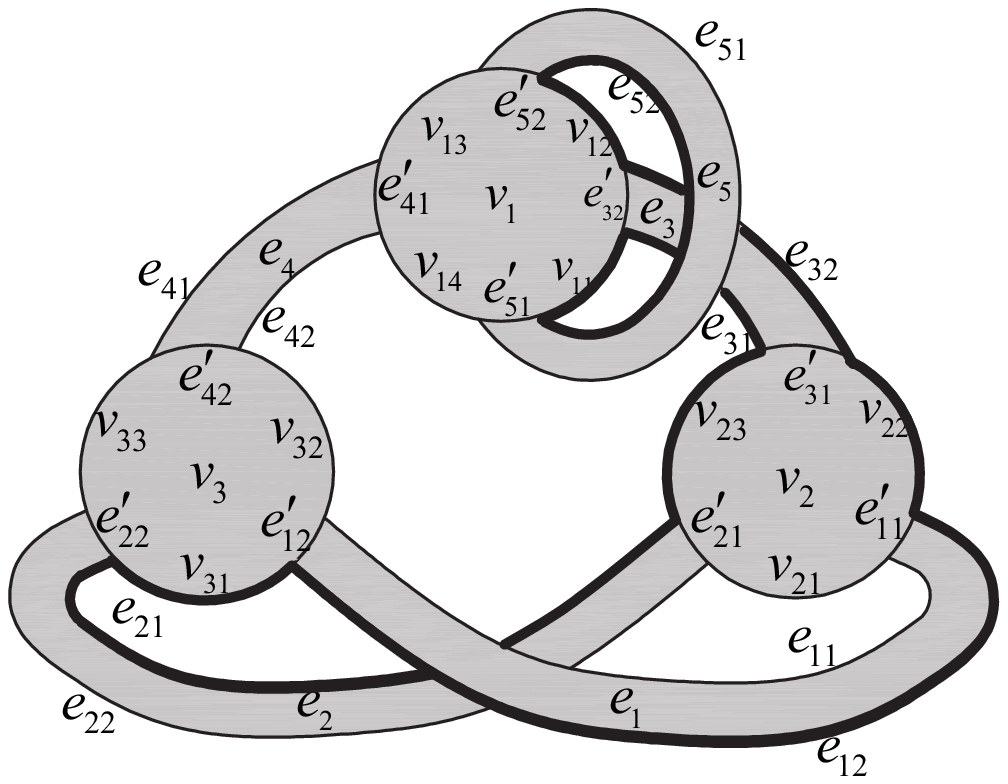}
\centerline{(a)}
\end{minipage}\begin{minipage}{5cm}

\includegraphics[width=1.8in]{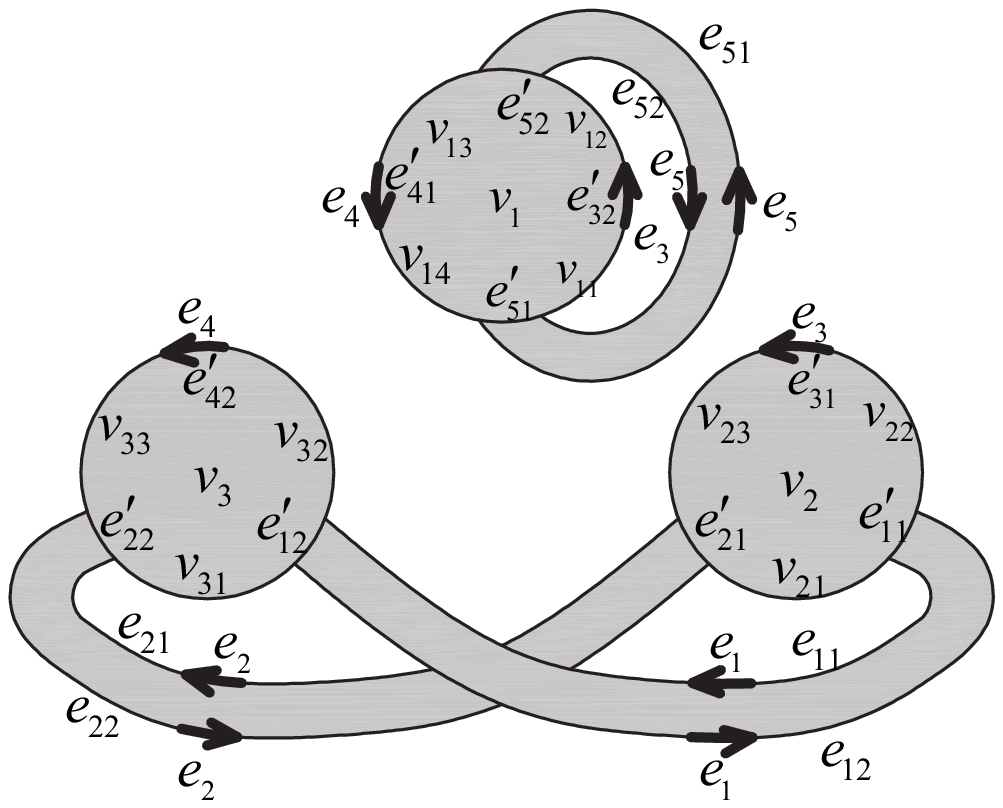}
\centerline{(b)}
\end{minipage}

\begin{minipage}{5cm}

\includegraphics[width=1.8in]{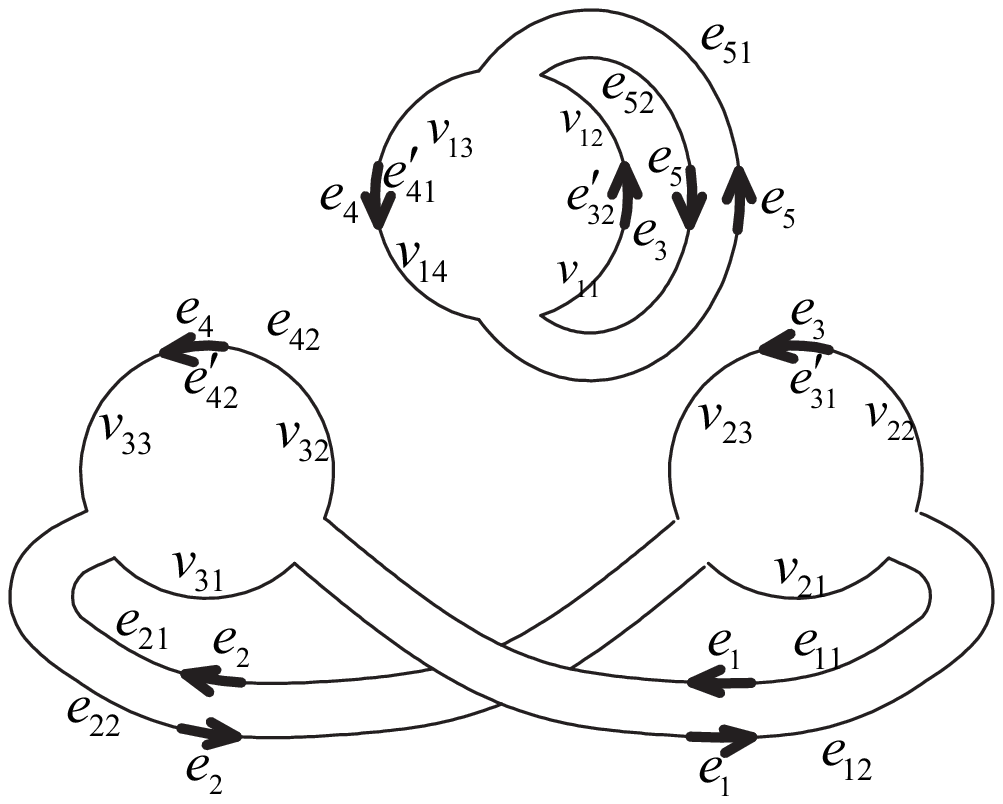}
\centerline{(c)}
\end{minipage}\begin{minipage}{5cm}

\includegraphics[width=1.8in]{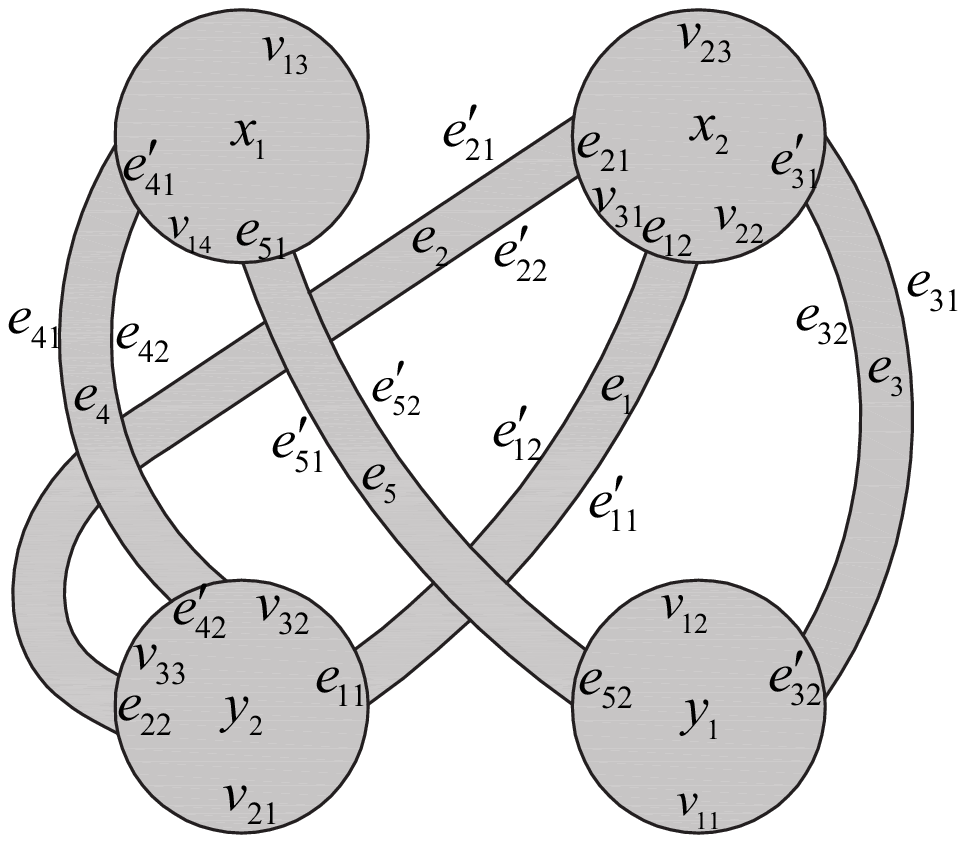}
\centerline{(d)}
\end{minipage}

\begin{minipage}{5cm}

 \includegraphics[width=1.8in]{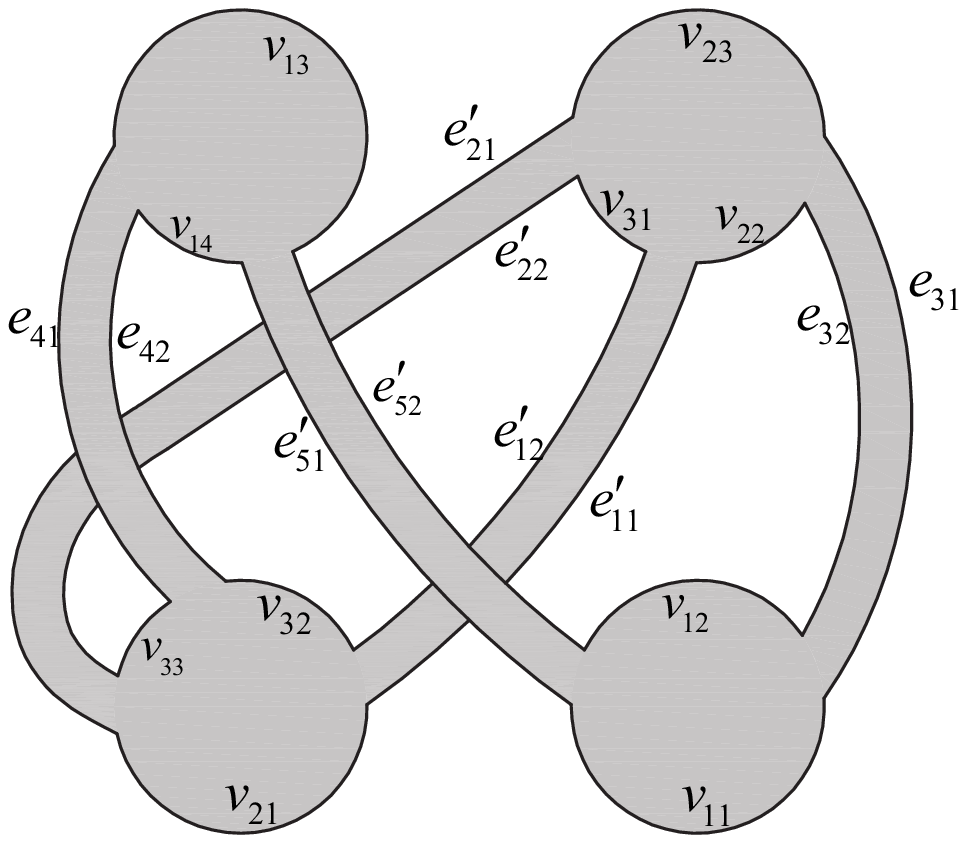}
\centerline{(e)}
\end{minipage}\begin{minipage}{5cm}

 \includegraphics[width=1.8in]{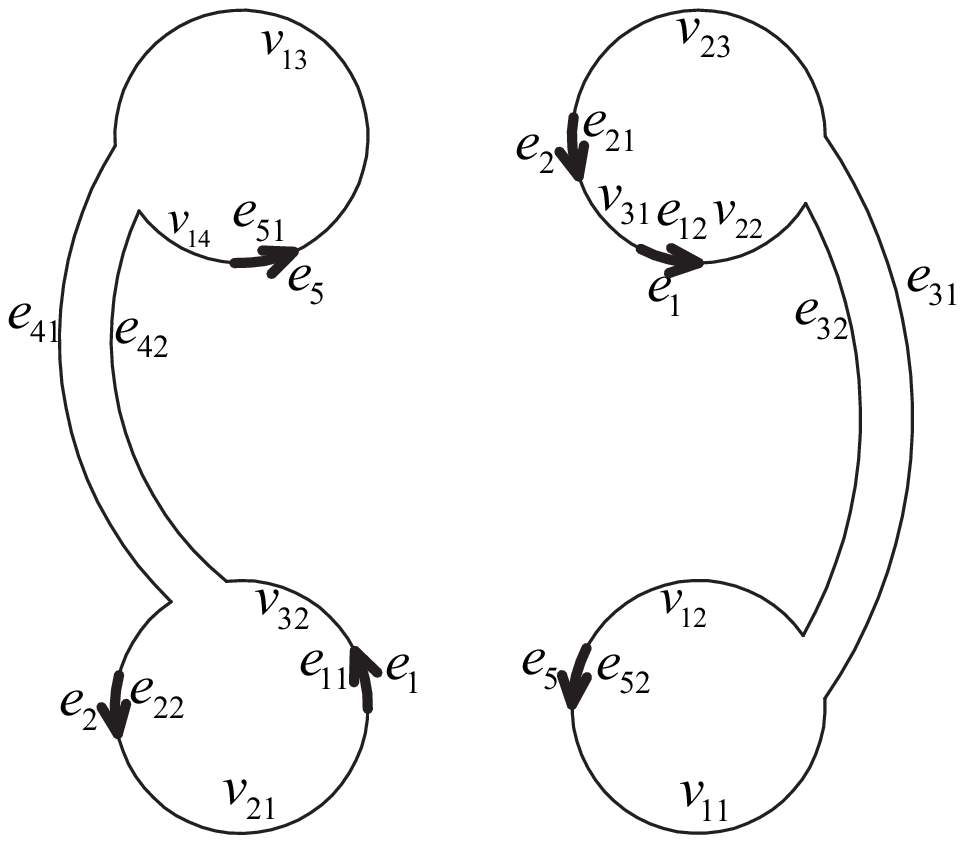}
\centerline{(f)}
\end{minipage}

\begin{minipage}{5cm}
\includegraphics[width=1.8in]{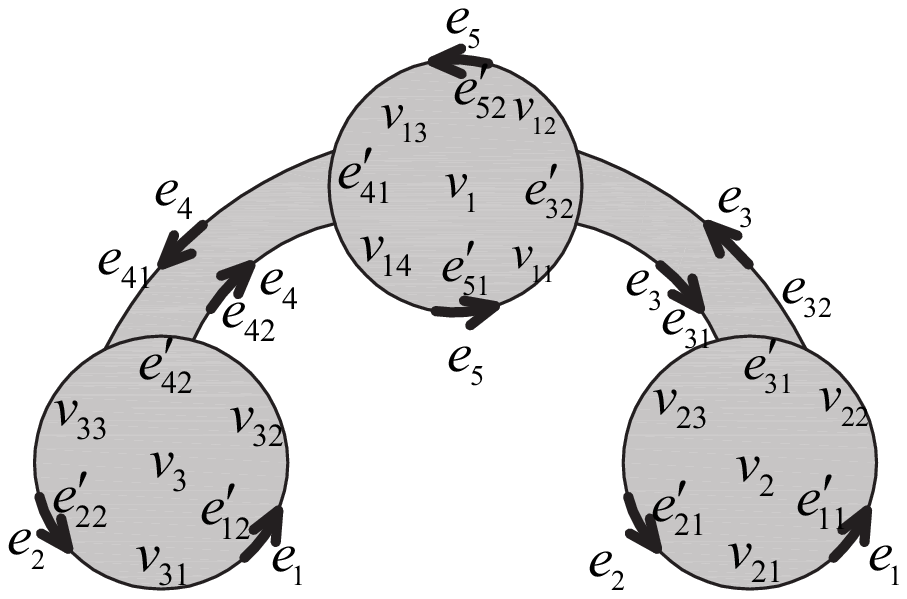}
\centerline{(g)}
\end{minipage}\begin{minipage}{5cm}

 \includegraphics[width=1.8in]{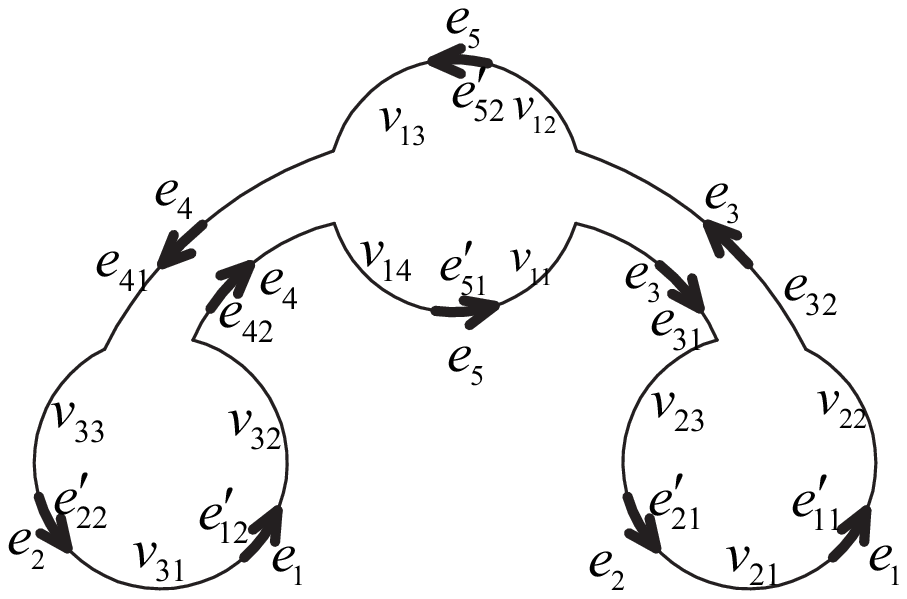}
\centerline{(h)}
\end{minipage}
\caption{Proofs of Lemma \ref{seg} and Theorem \ref{main}: (a) a ribbon graph $G$ and its common line segments, vertex line segments and edge line segments, (b) to (d) the construction of $G^A$ with $A=\{1,2,5\}$ and corresponding line segments, (e) the boundary component of $G^A$, (f) the
boundary components of $(V(G^A),A^c)$, (g) the marked spanning subgraph $(V(G),A^c)$ and (h) the arrow presentation of $G^{A^c}$.}\label{ls}
\end{figure}

\begin{thm}\label{main}
Let $G$ be a ribbon graph and $A\subset E(G)$. Then the partial dual $G^{A}$ is even-face graph
if and only if $G^{A^c}$ is Eulerian.
\end{thm}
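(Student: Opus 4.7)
The plan is a two-step reduction. First, I would use part~(3) of Theorem~\ref{pro0} applied with $B=E(G)$ to rewrite
\[
  G^{A^c} \;=\; G^{A\bigtriangleup E(G)} \;=\; (G^A)^{E(G)} \;=\; (G^A)^{*}.
\]
Setting $H=G^A$, the statement of Theorem~\ref{main} then reduces to the intrinsic assertion that a ribbon graph $H$ is even-face if and only if its geometric dual $H^{*}$ is Eulerian. This is the ribbon-graph analogue of the classical plane-graph fact recalled at the beginning of the introduction.

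For the reduced claim I would argue directly from Lemma~\ref{seg} with $A=E(H)$. Part~(4) then yields a natural bijection between the boundary components of $H$ and the vertex discs of $H^{*}=H^{E(H)}$. Along this bijection, part~(3) describes the boundary of each vertex disc of $H^{*}$ as being built from vertex line segments of $H$ together with the edge line segments of $H$ that lie on the corresponding boundary component of $H$, while part~(1) (with $A=E(H)$) identifies the common line segments of $H^{*}$ with exactly those edge line segments of $H$. Since the degree of a vertex in a ribbon graph equals the number of common line segments on its boundary, and the degree of a boundary component equals the number of edge line segments on it (vertex and edge line segments alternate around each boundary component), the bijection is degree-preserving. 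Therefore every boundary component of $H$ has even degree if and only if every vertex of $H^{*}$ has even degree, and combining with the first paragraph gives Theorem~\ref{main}.

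The main obstacle will be the bookkeeping in the second step: under partial duality the three flavors of line segments (vertex, edge, common) are shuffled in a way that depends on whether a given edge lies in the chosen subset, so one must carefully read off from Lemma~\ref{seg} how they correspond on the two sides of the duality $H\leftrightarrow H^{*}$. Once that dictionary is set, the remaining argument is just counting line segments on corresponding boundary components and invoking the alternation property to match face degrees on one side with vertex degrees on the other. A supporting picture, like parts~(g) and~(h) of Figure~\ref{ls}, should make the bijection visually transparent on the running example.
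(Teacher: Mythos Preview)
Your proposal is correct and follows essentially the same route as the paper: reduce via Theorem~\ref{pro0}(3) to $G^{A^c}=(G^A)^*$, then apply Lemma~\ref{seg}(4) with the full edge set to obtain the degree-preserving bijection between boundary components of $G^A$ and vertex discs of $(G^A)^*$. Your write-up is in fact more explicit than the paper's about why the bijection preserves degrees (invoking parts~(1) and~(3) of Lemma~\ref{seg} and the alternation of line segments), whereas the paper simply asserts this and points to Figure~\ref{ls}(e) and~(h) rather than~(g) and~(h).
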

\begin{proof}
By Theorem \ref{pro0}, $G^{A^c}$=$G^{A\triangle E(G)}=(G^A)^{E(G)}=(G^A)^*$. Take $A=E(G)$ in Lemma \ref{seg} (4), there is a bijection between the boundary components of a ribbon graph and vertex boundaries of its dual graph. Thus $G^A$ is even-face if and only if $G^{A^c}$ is Eulerian. In fact, there is a bijection between boundary components of $G^A$ and vertex boundaries of $G^{A^c}$ as shown in Figure \ref{ls} (e) and (h).
\end{proof}

\section{Medial graphs}
\noindent

Let $G$ be a cellularly embedded graph in $\Sigma$. We construct its medial graph $G_m$ by placing a vertex $v(e)$ on each edge $e$ of $G$ and then for each face $f$ with boundary $v_1e_1v_2e_2\cdots v_{d(f)}e_{d(f)}$, drawing the edges $(v(e_1),v(e_2))$,
$(v(e_2),v(e_3))$,$\cdots$,$(v(e_{d(f)}),v(e_1))$ non-intersected in a natural way inside the face $f$. Note that $G_m$ is also a cellularly embedded graph in $\Sigma$ and a 4-regular one. We shall denote by $t(G_m)$ the number of straight-ahead closed walks of $G_m$.

A checkerboard colouring of a cellularly embedded graph is an assignment of the
colour black or color white to each face such that adjacent faces receive different colours
(i.e., it is a proper 2-face colouring). A medial graph $G_m$ can always be checkerboardly coloured by colouring the faces
containing a vertex of the original graph $G$ black and the remaining faces white. We
call this checkerboard colouring of the medial graph $G_m$ the canonical checkerboard
colouring of $G_m$. See Figure \ref{mg} (a) and (b) for an example.

The medial graph of a ribbon graph or its arrow presentation is formed by translating it to the language of
cellularly embedded graph firstly, then forming the medial graph, and finally translating back. In Figure \ref{cdt}, we draw the medial graph of a ribbon graph as a cellularly embedded graph, not as a ribbon graph.

\subsection{Crossing-total directions}
\noindent

We are interested in particular in directed graphs which arise from orientations of
edges of medial graphs. A crossing-total direction of $G_m$ is an assignment
of an orientation to each edge of $G_m$ in such a way that for each vertex $v(e)$ of $G_m$, edges incident with $v(e)$ are "in,in,out,out", "in,in,in,in" or "out,out,out,out" oriented in cyclic order with respect to $v(e)$. See Figure \ref{mg} (c) for an example. If $G_m$ is equipped with the canonical checkerboard colouring and a fixed crossing-total direction, then we can partition
the vertices of $G_m$ into three classes, called $c$-vertices, $d$-vertices and $t$-vertices, respectively according
to the scheme shown in Figure \ref{cdt}. There is a natural bijection $e\leftrightarrow v(e)$ between edges of $G$ and vertices of $G_m$.
The corresponding edges of $G$ are called $c$-edges, $d$-edges and $t$-edges. See Figure \ref{mg} (d) for an example.

\begin{figure}[!htbp]
\begin{minipage}{3.2cm}
\includegraphics[width=1.2in]{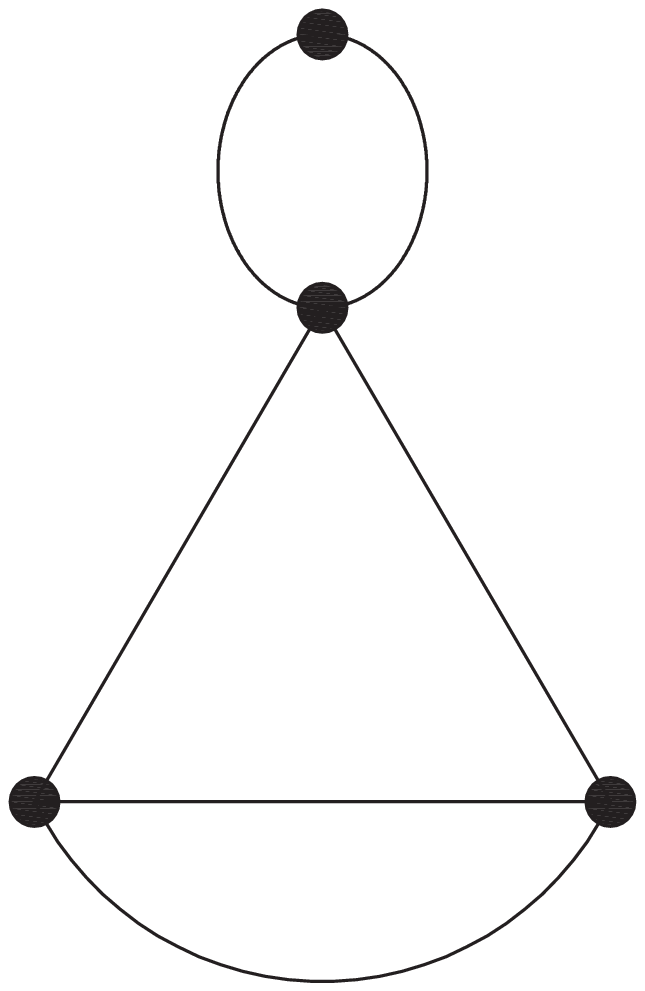}
\centerline{(a)}
\end{minipage}
\begin{minipage}{3.2cm}
\includegraphics[width=1.2in]{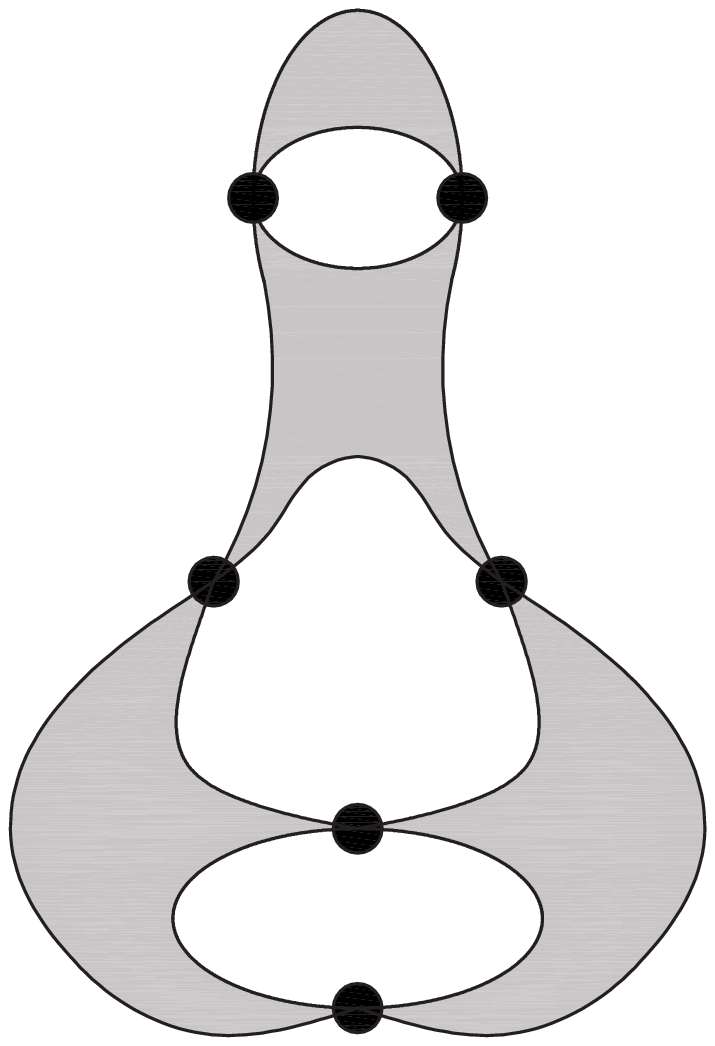}
\centerline{(b)}
\end{minipage}
\begin{minipage}{3.2cm}
\includegraphics[width=1.2in]{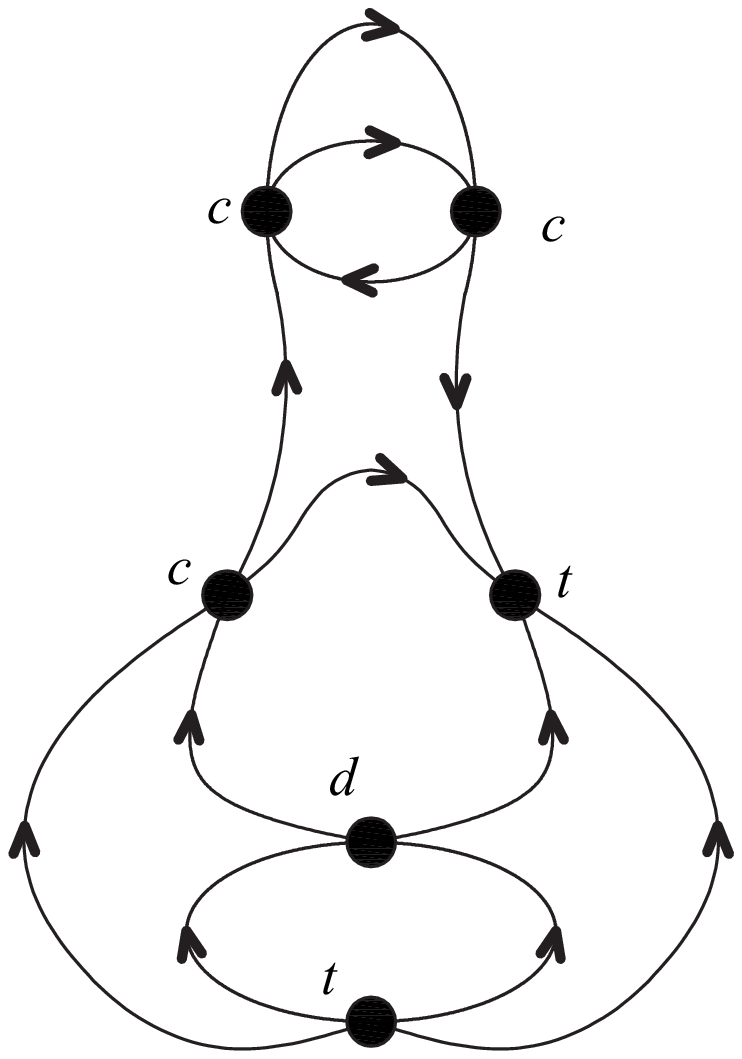}
\centerline{(c)}
\end{minipage}
\begin{minipage}{3.2cm}
\includegraphics[width=1.2in]{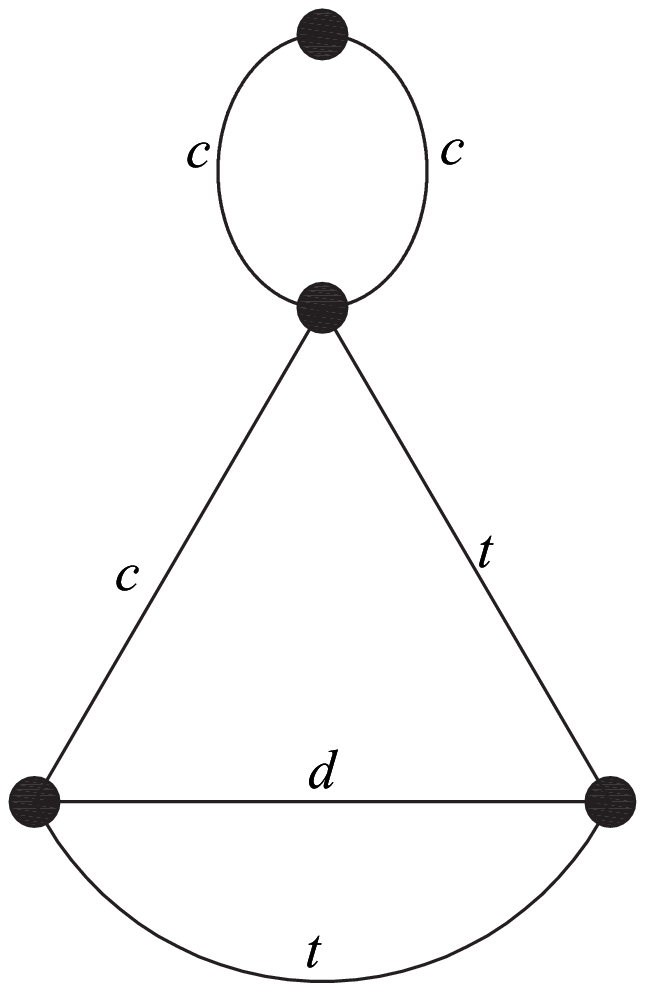}
\centerline{(d)}
\end{minipage}

\caption{(a) a plane graph $G$, (b) its medial graph $G_m$ with canonical checkerboard coloring and $t(G_m)=2$, (c) a crossing-total direction of $G_m$, and (d) the corresponding $\{c,d,t\}$-edges of $G$.}\label{mg}
\end{figure}

\begin{figure}[!htbp]
\centering
\includegraphics[width=5.5in]{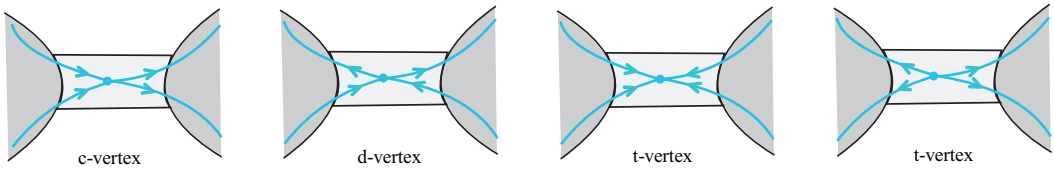}
\caption{{\footnotesize $\{c,d,t\}$-vertices.}}\label{cdt}
\end{figure}
Note that $G_m=(G^*)_m$ and the canonical checkerboard colouring of $(G^*)_m$ can be obtained from that of $G_m$ by switching colours black and white.

\begin{Lemma}\label{pro}
Let $G$ be a cellularly embedded graph and $e\in E(G)$. Under a fixed crossing-total direction of $G_m$, we have:
\begin{enumerate}
\item[(1)] $e$ is a $c$-edge in $G$ if and only if $e$ is a $d$-edge in $G^*$;
\item[(2)] $e$ is a $t$-edge in $G$ if and only if $e$ is a $t$-edge in $G^*$.
\end{enumerate}
\end{Lemma}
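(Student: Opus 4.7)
The plan is to exploit two observations already noted right before the lemma statement: first, the medial graphs $G_m$ and $(G^*)_m$ are literally the same cellularly embedded graph, so the crossing-total direction fixed on $G_m$ is also a crossing-total direction on $(G^*)_m$; second, the canonical checkerboard colouring of $(G^*)_m$ differs from that of $G_m$ only by a global swap of black and white. Since the partition of vertices of a 4-valent medial graph into $c$-, $d$-, and $t$-vertices depends only on the local configuration of the edge orientations together with the colour pattern (as prescribed by the scheme of Figure~\ref{cdt}), the lemma reduces to a purely local check: how does the type of a vertex $v(e)$ transform when the colours around it are swapped while the orientations are kept fixed?

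First I would record the three local cases supplied by Figure~\ref{cdt}. A $t$-vertex is one at which all four incident edges are oriented uniformly inward or uniformly outward; this configuration carries no reference at all to the black/white decoration, so swapping colours leaves a $t$-vertex as a $t$-vertex. A $c$-vertex and a $d$-vertex both correspond to the ``in, in, out, out'' cyclic pattern; what distinguishes them is precisely the placement of the two black (equivalently, white) corners relative to the two inward arrows. Reading off Figure~\ref{cdt}, the two types are mirror images of one another under the interchange black $\leftrightarrow$ white, so swapping colours sends $c$-vertices to $d$-vertices and vice versa.

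Combining these two facts with the natural bijection $e \leftrightarrow v(e)$ between edges of $G$ (respectively $G^*$) and vertices of $G_m = (G^*)_m$ immediately yields the lemma. For part~(1): $e$ is a $c$-edge in $G$ iff $v(e)$ is a $c$-vertex with respect to the canonical colouring of $G_m$, iff $v(e)$ is a $d$-vertex with respect to the swapped colouring, iff $v(e)$ is a $d$-vertex with respect to the canonical colouring of $(G^*)_m$, iff $e$ is a $d$-edge in $G^*$. Part~(2) follows in the same way using the colour-independence of the $t$-configuration.

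The only genuine obstacle is making the colour-swap step rigorous, i.e.\ confirming that the figure's classification really does respect the symmetry I am invoking. I would handle this by explicitly drawing the two possible ``in, in, out, out'' patterns at a vertex of $G_m$, labelling the four incident faces with their colours, and verifying directly from Figure~\ref{cdt} that one pattern is labelled $c$ and the other $d$; the verification that an all-in or all-out vertex is classified $t$ regardless of colouring is immediate from the same figure. Once these three local checks are in place, the global argument above finishes the proof with no further work.
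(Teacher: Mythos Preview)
Your argument is correct and follows exactly the line the paper intends: the paper states this lemma without proof, treating it as an immediate consequence of the sentence just before it (that $G_m=(G^*)_m$ and the canonical checkerboard colourings differ only by swapping black and white). Your proposal simply makes that implicit reasoning explicit via the local check at $v(e)$, which is precisely the intended justification.
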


In \cite{HM}, Huggett and Moffatt introduced the notion of all-crossing direction of $G_m$ and then characterized all bipartite
partial duals of a plane graph using all-crossing directions. An all-crossing direction is a crossing-total direction without $t$-vertices.

\subsection{Graph states}

Let $G_m$ be a canonically checkerboard
coloured medial graph and $v(e)\in V(G_m)$. There are two ways to split $v(e)$ into two vertices of degree 2 as shown in Figure \ref{cc}. We call them white
smoothing and black smoothing respectively.

\begin{figure}[!htbp]
\centering
\includegraphics[width=4.5in]{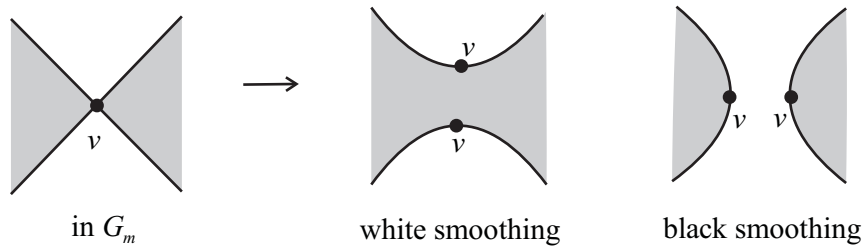}
\caption{{\footnotesize White and black smoothings.}}\label{cc}
\end{figure}

Let $G$ ba a cellularly embedded graph in $\Sigma$. A state $S$ of $G$ is a choice of white smoothing and black smoothing at each vertex of $G_m$. After splitting each vertex according to the state $S$, we obtain a set of disjoint closed cycles in $\Sigma$, call them state circles of $S$. We denote the number of state circles of $S$ by $c(S)$. A state is called even if each of its state circles is a cycle of even length. We denote the set of even states of $G$ by $\mathscr{ES}$.

Let $A\subset E(G)$ and $V_A=\{v(e)|e\in A\}\subset V(G_m)$. A state $S_A$ of $G$ associated with $A$ is the choice of choosing white smoothing for each vertex of $V_A$ and black smoothing for each vertex of $V(G_m)\backslash V_A$.

\begin{Lemma}\label{l1}
Let $G\subset\Sigma$ be a cellularly embedded graph and $A\subset E(G)$.
Let $S_A$ be the graph state of $G$ associated with $A$.
Then there is a bijection between the vertices (of degree 2) of state circles of $S_A$ and the marking arrows (ignoring directions) of the arrow representation of $G^{A}$.
\end{Lemma}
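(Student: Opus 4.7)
The plan is to establish the bijection via a geometric identification of the state circles of $S_A$ with the boundary components of the spanning ribbon subgraph $(V(G),A)$, which are precisely the curves along which Definition \ref{dpd} places the marking arrows of $G^A$.

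First I would match cardinalities. Every vertex $v(e)\in V(G_m)$ is split by $S_A$ into two degree-$2$ vertices, so the state circles carry $2|E(G)|$ degree-$2$ vertices in total; on the other side, Definition \ref{dpd} places exactly two marking arrows (labelled $e$) per edge $e\in E(G)$, so the arrow presentation of $G^A$ also carries $2|E(G)|$ arrows.

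Next I would carry out a local analysis at each medial vertex $v(e)$, viewing $G_m$ inside the surface $\Sigma$ carrying $G$ equipped with its canonical checkerboard colouring (black faces containing the vertices of $G$). If $e\in A$, then $S_A$ performs a white smoothing at $v(e)$, so the two replacement arcs lie in the two white face-corners adjacent to $v(e)$ and place one degree-$2$ vertex close to each of the two edge line segments of $e$. If $e\notin A$, then $S_A$ performs a black smoothing, the two replacement arcs lie in the two black vertex-corners, and one degree-$2$ vertex is placed close to each of the two common line segments of $e$.

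Gluing these local pictures along the edges of $G_m$ and isotoping inside $\Sigma$, I would argue that the union of state circles coincides with the boundary of $(V(G),A)$: this boundary runs along the two long sides of each edge ribbon with $e\in A$ (matching the white-smoothed case) and through the two common line segments of each edge with $e\notin A$ along the vertex-disc boundaries (matching the black-smoothed case). Under the resulting identification, each degree-$2$ vertex lies on an edge line segment of some $e\in A$ or on a common line segment of some $e\notin A$, precisely at a position where Definition \ref{dpd} places a marking arrow labelled $e$. Sending each degree-$2$ vertex to the arrow at that position gives the bijection; injectivity and surjectivity follow from the cardinality match together with the local correspondence at every $v(e)$. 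The main delicate step is pinning down the local picture at a single $v(e)$ so that white smoothings pair up with edge line segments and black smoothings with common line segments; once this is verified (essentially a figure-check using the canonical checkerboard colouring and the convention of Figure \ref{cc}), the global identification is automatic.
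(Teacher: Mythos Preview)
Your proposal is correct and is essentially an explicit unpacking of the paper's own one-line proof, which simply reads ``Compare Figure~\ref{cc} with Definition~\ref{dpd}.'' The geometric identification you describe---white smoothing at $v(e)$ for $e\in A$ tracing the edge line segments of $e$, black smoothing for $e\notin A$ tracing the common line segments, so that the state circles coincide with $\partial(V(G),A)$ and each degree-$2$ vertex sits exactly where Definition~\ref{dpd} places a marking arrow---is precisely the content of that comparison, just written out in words rather than left to the reader's inspection of the figure.
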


\begin{proof}
Compare Figure \ref{cc} with Definition \ref{dpd}.
\end{proof}

\begin{Lemma}\label{map}
Let $N_{CT}(G)$ be the number of crossing-total directions of $G_m$. Then
\begin{eqnarray}
2^{t(G_m)}\leq N_{CT}(G)\leq \sum_{S\in \mathscr{ES}}2^{c(S)}.
\end{eqnarray}
\end{Lemma}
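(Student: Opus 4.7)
For the lower bound $2^{t(G_m)} \leq N_{CT}(G)$, my plan is to orient each of the $t(G_m)$ straight-ahead closed walks of $G_m$ independently and observe that the resulting orientation of the edges of $G_m$ is crossing-total. At every vertex $v(e)$ the two walks through $v(e)$ use the two pairs of opposite half-edges, so once each walk is oriented, each pair contributes one ``in'' and one ``out'' arrow; the resulting cyclic pattern is $(\mathrm{in,in,out,out})$ up to rotation. The $2^{t(G_m)}$ choices give pairwise distinct directions, since the orientation of a walk can be read off any one of its edges.

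For the upper bound, the plan is a double-counting argument on pairs $(S,D)$ where $S$ is a state of $G$ and $D$ is a direction of $G_m$. Call such a pair \emph{compatible} if at every vertex $v(e)$ each of the two arcs of the smoothing prescribed by $S$ consists of two half-edges pointing in the same direction under $D$ (both ``in'' or both ``out'' at $v(e)$). A direct case check shows that compatibility forces $D$'s configuration at $v(e)$ to be either $(\mathrm{in,in,out,out})$ rotated so that an in-in pair is one of the arcs, or all-in, or all-out; in particular $D$ must be a crossing-total direction.

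The main technical point is that compatibility also forces $S$ to be an even state. I would orient each state circle $C$ of $S$ arbitrarily and, for every edge $e$ of $G_m$ on $C$, define a sign $\sigma_e\in\{+1,-1\}$ according to whether the state-circle direction on $e$ agrees with $D$. A local computation at each arc shows that, precisely because the arc is same-direction, the two signs on its adjacent edges of $C$ differ; hence $\sigma$ flips at every arc of $C$, and the closure condition forces the number of arcs of $C$, which is the length of $C$, to be even.

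Finally, I would count compatible pairs in two ways. For a fixed $S\in\mathscr{ES}$, on each state circle the alternating sign pattern is determined up to a single global flip, and these flips can be chosen independently on each of the $c(S)$ circles, giving $2^{c(S)}$ compatible directions. For a fixed crossing-total $D$, at every $c$- or $t$-vertex the compatible smoothing is uniquely forced by $D$, while at every $d$-vertex both smoothings are compatible, giving $2^{d(D)}$ compatible (and automatically even) states, where $d(D)$ is the number of $d$-vertices of $D$. Double-counting then yields $\sum_{S\in\mathscr{ES}} 2^{c(S)} = \sum_{D\text{ crossing-total}} 2^{d(D)} \geq N_{CT}(G)$, which is the desired inequality. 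The main obstacle is the sign-flip computation in the third paragraph; the remaining steps are essentially bookkeeping.
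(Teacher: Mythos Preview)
Your lower-bound argument is correct and coincides with the paper's: orient each straight-ahead closed walk independently, observe that opposite half-edges at every vertex then carry one ``in'' and one ``out'', hence the cyclic pattern is $(\mathrm{in,in,out,out})$, and note that all-crossing directions are in particular crossing-total.

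For the upper bound your double-counting scheme is essentially the paper's argument made explicit: the paper, given a crossing-total direction $\mathcal{O}$, produces an even state together with an alternating orientation of its circles (exactly your ``compatible pair'') and implicitly uses that $\mathcal{O}$ is recoverable from this data. Your version is more informative in that it yields an identity, not just an inequality. However, you have the vertex types reversed in the per-$D$ count. At an all-in or all-out vertex, i.e.\ a $t$-vertex, \emph{both} smoothings satisfy your same-direction arc condition; at an $(\mathrm{in,in,out,out})$ vertex, i.e.\ a $c$- or $d$-vertex, exactly one smoothing does, namely the one pairing the two ``in'' half-edges together and the two ``out'' half-edges together. (The paper states this as: white smoothing at $d$-edges, black smoothing at $c$-edges, either smoothing at $t$-edges.) Hence the number of states compatible with a fixed crossing-total $D$ is $2^{t(D)}$, not $2^{d(D)}$, where $t(D)$ is the number of $t$-vertices. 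The slip is harmless for the stated lemma, since $2^{t(D)}\geq 1$ still gives $\sum_{S\in\mathscr{ES}}2^{c(S)}\geq N_{CT}(G)$, but the double-count identity as you wrote it is false in general.
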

\begin{proof}
Recall that $t(G_m)$ is the number of straight-ahead closed walks of $G_m$. For each such closed walk, one can choose clockwise or anticlockwise orientation, then obtaining an all-crossing direction of $G_m$. Conversely using an all-crossing direction of $G$ one can split $E(G_m)$ into straight-ahead closed walks and such walks are either clockwise oriented or anticlockwise oriented. Thus the number of all-crossing directions of $G_m$ is exactly $2^{t(G_m)}$. This is in fact pointed out in Section 4.1 of \cite{HM}. Any all-crossing direction of $G_m$ is also a crossing-total direction and hence the lower bound holds.

Now we prove the upper bound. Let $\mathcal{O}$ be a crossing-total direction of $G_m$. We shall take a white smoothing for each $d$-edge and a black smoothing for each $c$-edge, take a white or black smoothing for each $t$-edge of $G$, then we obtain a state $S_{\mathcal{O}}$ and for each state circle, its orientation coming from $\mathcal{O}$ is alternating between clockwise and anticlockwise. Thus $S_{\mathcal{O}}\in \mathscr{ES}$ and is equipped with an alternating orientation.
\end{proof}

\section{Main results}
\noindent

In \cite{HM}, Huggett and Moffatt gave a sufficient and necessary condition for partial duals of a plane graph $G$ to be bipartite in terms all-crossing directions of its medial graph $G_m$. Roughly speaking, it is obtained by partitioning $E(G)$ into $A$ and $A^c$ and considering $(V(G),A)$ and $(V(G^*),A^c)$. In this section we use a direct method to extend their result from plane graphs to orientable ribbon graphs.

\begin{thm}\label{main0}
Let $G$ be an orientable ribbon graph and $A\subset E(G)$. Then $G^{A}$ is bipartite
if and only if $A$ is the set of $c$-edges arising from an all-crossing direction
of $G_m$.
\end{thm}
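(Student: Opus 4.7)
The plan is to translate bipartiteness of $G^A$ into a proper 2-colouring of the state circles of $S_A$, and then identify that 2-colouring with the $c$/$d$ split arising from an all-crossing direction of $G_m$. By Lemma \ref{l1}, the circles of the arrow presentation of $G^A$ (equivalently, the vertices of $G^A$) are in bijection with the state circles of $S_A$; under this bijection, each $v(e) \in V(G_m)$ corresponds to an edge of $G^A$ whose two endpoints are the state circles containing the two degree-$2$ vertices produced by splitting $v(e)$. Hence $G^A$ is bipartite if and only if the state circles of $S_A$ can be 2-coloured so that, for every $v(e) \in V(G_m)$, the two state circles meeting at $v(e)$ receive different colours (in particular they must be distinct, which forbids loops in $G^A$).

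For the backward direction, assume $A$ is the set of $c$-edges of an all-crossing direction $\mathcal{O}$ of $G_m$. A local inspection at each $c$-vertex (resp.\ $d$-vertex), using the pictures in Figure \ref{cdt}, shows that the white (resp.\ black) smoothing prescribed by $S_A$ pairs the four incident edges in the unique way that is consistent with $\mathcal{O}$, so every state circle of $S_A$ inherits a globally consistent orientation from $\mathcal{O}$. Because $G$ is orientable, $\Sigma$ is orientable, so we may speak unambiguously of the face lying immediately to the left of a traversed state circle. A case check at both the $c$-vertex white smoothing and the $d$-vertex black smoothing shows that the canonical checkerboard colour of the left-hand face does not change as the walk crosses the smoothing, so each state circle has a well-defined left-colour (black or white). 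At each $v(e)$ the two state circles through $v(e)$ approach from opposite slots of the ``in,in,out,out'' configuration, forcing their left-colours to be opposite. Colouring the state circles by their left-colour is therefore a proper 2-colouring, and $G^A$ is bipartite.

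For the forward direction, assume $G^A$ is bipartite and fix a proper 2-colouring of the state circles of $S_A$ by black and white. Using the orientation of $\Sigma$, orient each state circle so that its assigned colour lies on its left; this assigns a direction to every edge of $G_m$, since each edge belongs to a unique state circle. At each $v(e)$ the two state circles through $v(e)$ receive opposite colours, and the smoothing at $v(e)$ is white precisely when $e \in A$; running the local analysis of the previous paragraph in reverse shows that the four edge directions at $v(e)$ form an ``in,in,out,out'' pattern whose transitions straddle white faces when $e \in A$ and black faces otherwise. In particular, no $t$-vertex occurs, so the resulting orientation is an all-crossing direction of $G_m$ whose set of $c$-edges is exactly $A$. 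The technical crux is the left-face-colour bookkeeping across the two types of smoothings; orientability of $\Sigma$ is essential here, since without a global notion of ``left'' the construction breaks down, which is precisely why the theorem fails in the non-orientable case.
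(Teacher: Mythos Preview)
Your argument is correct, and the backward direction is essentially the paper's: the paper labels each edge of $G_m$ by $x$ or $y$ according as the black face lies on its right or left along the all-crossing direction, which is precisely your ``left-colour'' of the oriented state circles of $S_A$, and then reads off the bipartition of $V(G^A)$ from this labelling.

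The forward direction, however, follows a genuinely different route. The paper does \emph{not} work with state circles in $\Sigma$; instead it pushes the bipartition $(X,Y)$ of $V(G^A)$ onto the vertex line segments of $G$, observes that these alternate around the boundary components of $G^A$, and then uses the bijection of Lemma~\ref{seg}(4) to transport this alternation to the vertex-disc circles of $G^{A^c}$. Orientability enters via Theorem~\ref{pro0}(4) and Lemma~\ref{pro1}: since $G^{A^c}$ is orientable, its arrow presentation can be chosen with all arrows on each circle consistently directed, and the $X$/$Y$ alternation then forces each matched pair of arrows to run $x\to y$ and $y\to x$; reassembling gives the all-crossing direction with $A$ as its $c$-edges. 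Your approach instead stays inside $\Sigma$, uses the global orientation of $\Sigma$ to speak of the ``left side'' of a state circle, and reverses your own backward argument. This is more self-contained and symmetric (both directions use the same left-colour bookkeeping), while the paper's route exploits the $G^A\leftrightarrow G^{A^c}$ duality and the arrow-presentation machinery already set up in Section~2. Both identify the role of orientability cleanly: for you it is the existence of a global ``left'', for the paper it is Lemma~\ref{pro1}.
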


\begin{proof} ($\Longrightarrow$) Let $(X,Y)$ be the bipartite partition of $V(G^A)$. Suppose that $X=\{x_1,x_2,\cdots,x_k\}$ and $Y=\{y_1,y_2,\cdots,y_l\}$. Note that circles of arrow representation of $G^A$ are exactly boundaries of vertex discs of $G^A$. By Lemma \ref{seg} (3), vertex line segments of $G$ all belong to boundaries of $x_i$ or $y_j$. We rename the vertex line segment as follows: if the vertex line segment $v_{rs}$ on the boundary of $x_i$, it will be renamed $x_i$; if it is on the boundary of $y_j$, it will be renamed $y_j$. An example is given in Figure \ref{bp} (a), where $X=\{x_{1},x_{2}\}$, $Y=\{y_1,y_2\}$, $\partial(x_1)\supset \{v_{14},v_{13}\}$, $\partial(y_1)\supset \{v_{11},v_{12}\}$, $\partial(x_2)\supset \{v_{22},v_{23},v_{31}\}$, $\partial(y_2)\supset \{v_{21},v_{33},v_{32}\}$. It is clear that on each boundary component of $G^A$ $x_i$'s and $y_j$'s alternate since $G^A$ is bipartite. Recall that there is an one-to-one correspondence between boundary components of $G^A$ and boundaries of vertex discs (i.e. circles of arrow presentation) of $G^{A^c}$. Thus $x_i$'s and $y_j$'s alternate in circles of arrow representation of $G^{A^c}$. An example is given in Figure \ref{bp} (b).

\begin{figure}
\centering
\begin{minipage}{7cm}
\includegraphics[width=2.6in]{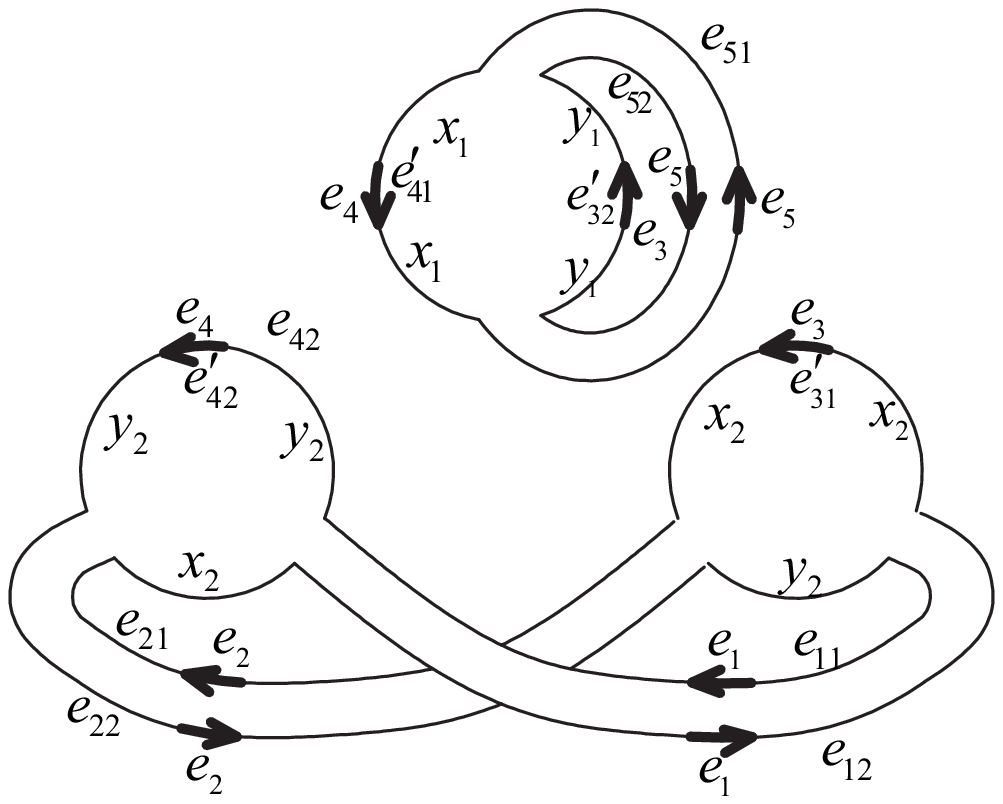}
\centerline{(a)}
\end{minipage}\begin{minipage}{7cm}
\includegraphics[width=2.6in]{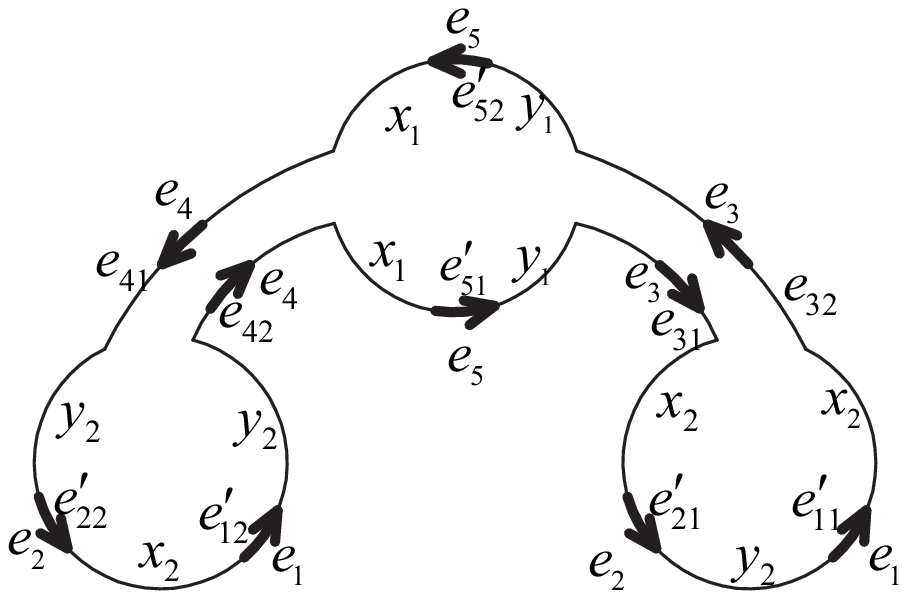}
\centerline{(b)}
\end{minipage}
\caption{An example of (a) renaming vertex line segments of $G$ in $G^A$, (b) boundaries of vertex discs of $G^{A^c}$.}\label{bp}
\end{figure}

The local depiction near an edge $e_h$ of arrow presentation of $G^A$ is given in Figure \ref{c1} (a) and (a') for $e_h\in A$ and $e_h\notin A$, respectively.
Figure \ref{c1} (b) and (b') are their counterparts of arrow presentation of $G^{A^c}$.

Note that $G$ is orientable implies that $G^{A^c}$ is orientable by Theorem \ref{pro0} (4). Hence by Lemma \ref{pro1} directions of arrows in the same circle of arrow presentation of $G^{A^c}$ can have same orientation as shown in Figure \ref{bp} (b), and if one of a pair of marking arrows with the same label is directed from $x_i$ to $y_j$ then the other will be directed from $y_j$ to $x_i$. See Figure \ref{c1} (b) and (b').

If we replace marking arrow directed from $x_i$ to $y_j$ with a 2 valent vertex with two ingoing half edges, replace marking arrow directed from $y_j$ to $x_i$ with a 2 valent vertex with two outgoing half edges. Note that the orientations of two-half edges of $x_i$ or $y_j$ coincide, merging into one arrow. Thus we obtain an alternating orientation on circles of arrow representation of $G^{A^c}$. See Figure \ref{c1} (c) and (c'). Transfer them to orientations of $G_m$ as shown in Figure \ref{c1} (d) and (d'), we obtain an all-crossing direction of $G_m$. Furthermore each edge of $A$ is a $c$-edge and each edge of $A^c$ is a $d$-edge as shown in Figure \ref{c1} (e) and (e').

($\Longleftarrow$) Let $\mathcal{O}$ be an all-crossing direction of $G_m$ such that $A$ is the set of $c$-edges arising from $\mathcal{O}$. Then $A^c$ is the set of $d$-edges. We label $x$ the edges of $G_m$ if the black colour is on their right along the directions,  $y$ the edges of $G_m$ if the black colour is on their left along the directions. See Figure \ref{c1} (d) and (d'). Let $S_A$ be the state of $G$ associated with $A$. State circles of $S_A$ correspond to circles of arrow presentation of $G^A$ as shown in Figure \ref{c1} (a) and (a'). Then circles labelled $x$ and circles labelled $y$ form vertex bipartition of $G^A$. Thus $G^A$ is bipartite.
\begin{figure}[!htbp]
\begin{minipage}{8cm}
\includegraphics[width=5.2in]{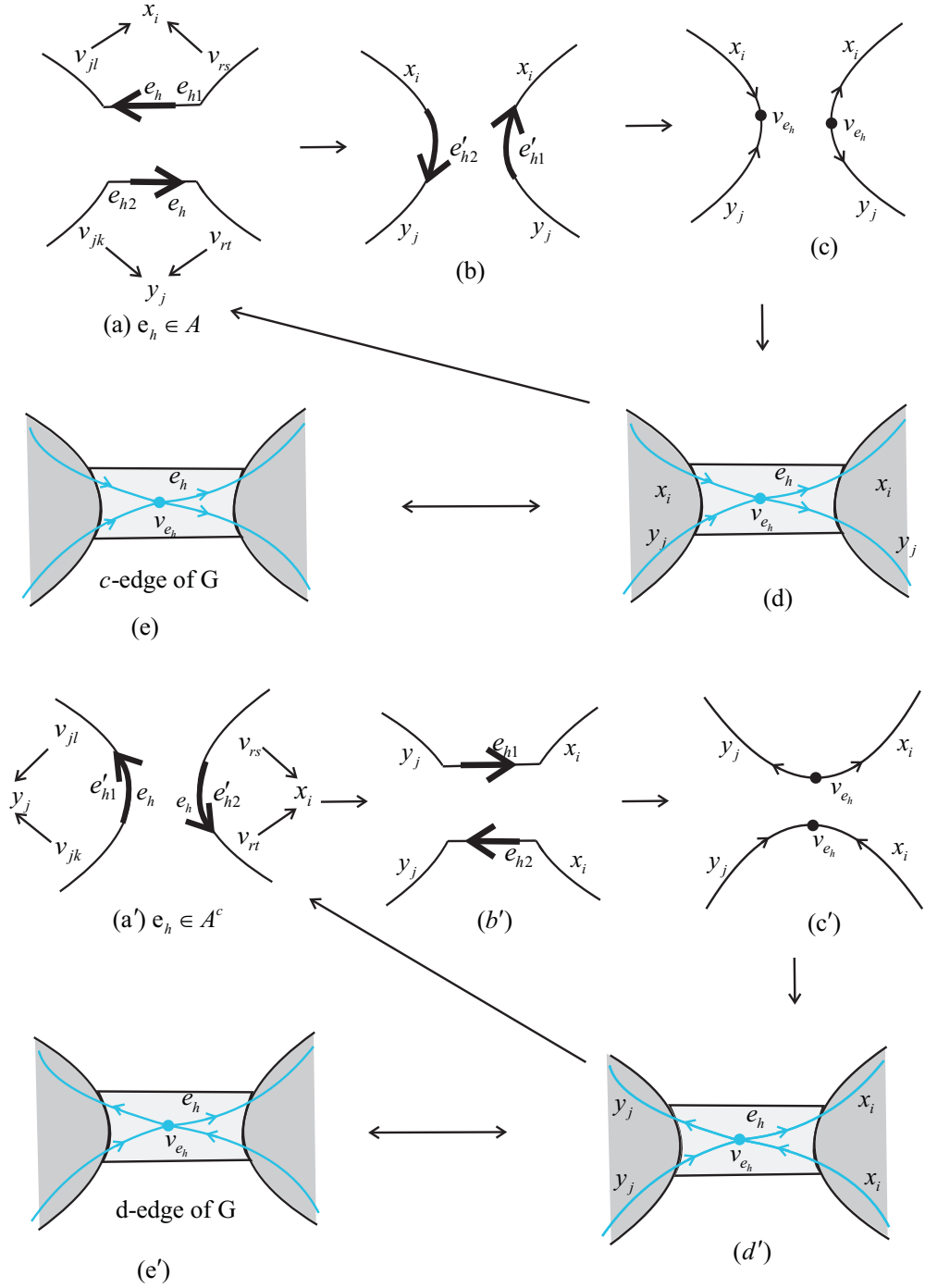}
\end{minipage}
\caption{{\footnotesize Proof of Theorem \ref{main0}.}}\label{c1}
\end{figure}

\end{proof}

As a direct consequence, we have

\begin{cor}\cite{HM}\label{co}
Let $G$ be a plane graph and $A\subset E(G)$. Then $G^{A}$ is bipartite
if and only if $A$ is the set of $c$-edges arising from an all-crossing direction
of $G_m$.
\end{cor}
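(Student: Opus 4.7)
The plan is essentially to observe that Corollary~\ref{co} is a direct specialization of Theorem~\ref{main0}, so the bulk of the work has already been done. A plane graph, in the sense used in this paper, is by definition a ribbon graph of genus $0$; since every connected component is homeomorphic to a subset of the sphere $S^2$, it is automatically orientable. Hence a plane graph is a special case of an orientable ribbon graph.

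First I would verify the compatibility of the medial graph constructions. The medial graph $G_m$ of a plane graph, defined in the classical way by placing a vertex on each edge and joining consecutive edges around each face, coincides with the medial graph obtained from the ribbon graph viewpoint, since the construction in Section~3 is just the translation of the classical construction through the equivalence between ribbon graphs and cellularly embedded graphs. In particular, the canonical checkerboard colouring and the notions of all-crossing direction, $c$-edge, $d$-edge and $t$-edge specialize to the familiar ones on the plane.

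Next I would simply invoke Theorem~\ref{main0}. Setting $G$ to be the plane graph in question and $A \subset E(G)$ arbitrary, Theorem~\ref{main0} asserts that $G^A$ is bipartite if and only if $A$ is the set of $c$-edges arising from an all-crossing direction of $G_m$. This is exactly the statement of the corollary, so no further argument is required.

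There is no real obstacle here; the only point worth mentioning is that the original plane-graph result of Huggett and Moffatt in \cite{HM} was formulated with a separate combinatorial argument, whereas the extension to orientable ribbon graphs via Theorem~\ref{main0} now supplies it for free. If a slightly fuller write-up is desired, one could also note that the equivalence $(G^A)^{A^c} = G^{E(G)} = G^*$ combined with Theorem~\ref{main} recovers the classical duality between Eulerian and bipartite plane graphs in the case $A = E(G)$, showing that Corollary~\ref{co} contains the well-known theorem stated at the outset of Section~1 as the extremal case.
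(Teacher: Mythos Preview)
Your proposal is correct and matches the paper's approach exactly: the paper states Corollary~\ref{co} immediately after Theorem~\ref{main0} with the phrase ``As a direct consequence, we have'' and gives no further proof. Your additional remarks (that a plane graph is orientable, compatibility of the medial constructions, and the recovery of the classical Eulerian/bipartite duality) are accurate but go beyond what the paper records.
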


\begin{remark}
The condition 'orientable' in Theorem \ref{main0} is necessary. In Figure \ref{cx} (a), we give a counterexample $G$, which is non-orientable. The medial graph $G_m$ is given in Figure \ref{cx} (b) and its unique $c$ and $d$-vertex partition is shown in Figure \ref{cx} (c). The partial dual of $G$ with respect to the set of all $c$-edges is a graph having a loop (the edge labeled 1) as shown in Figure \ref{cx} (4), hence not bipartite. It is an open question to characterize bipartite partial duals of non-orientable ribbon graphs.

\begin{figure}[!htbp]
\begin{minipage}{7cm}
\includegraphics[width=2.6in]{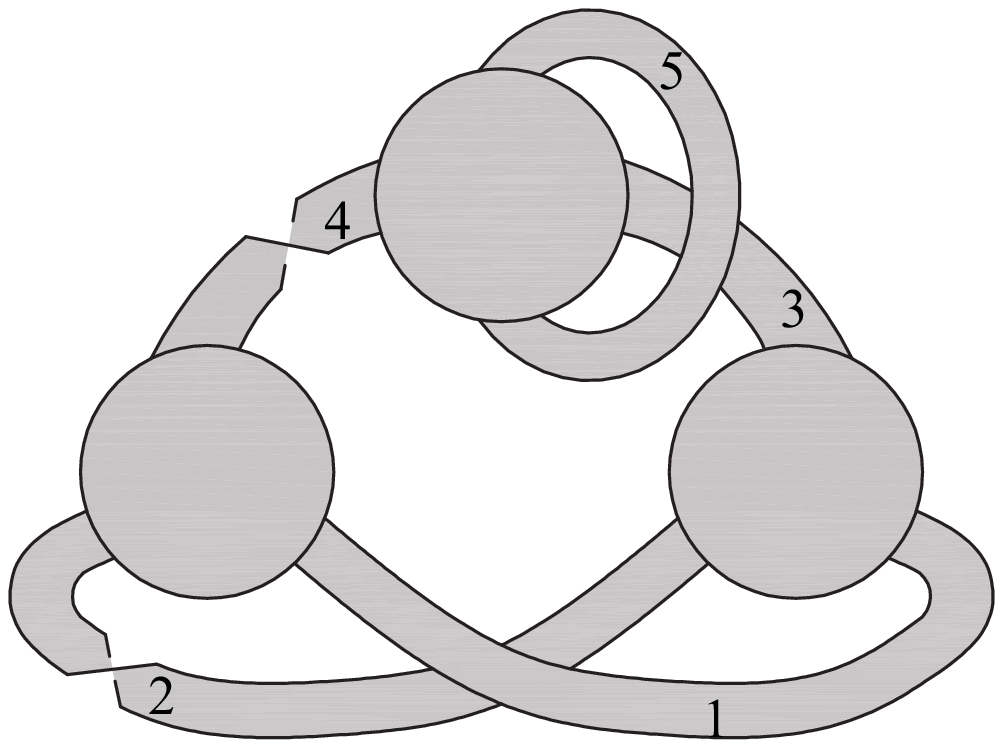}
\centerline{(a)}
\end{minipage}\begin{minipage}{7cm}
\includegraphics[width=2.6in]{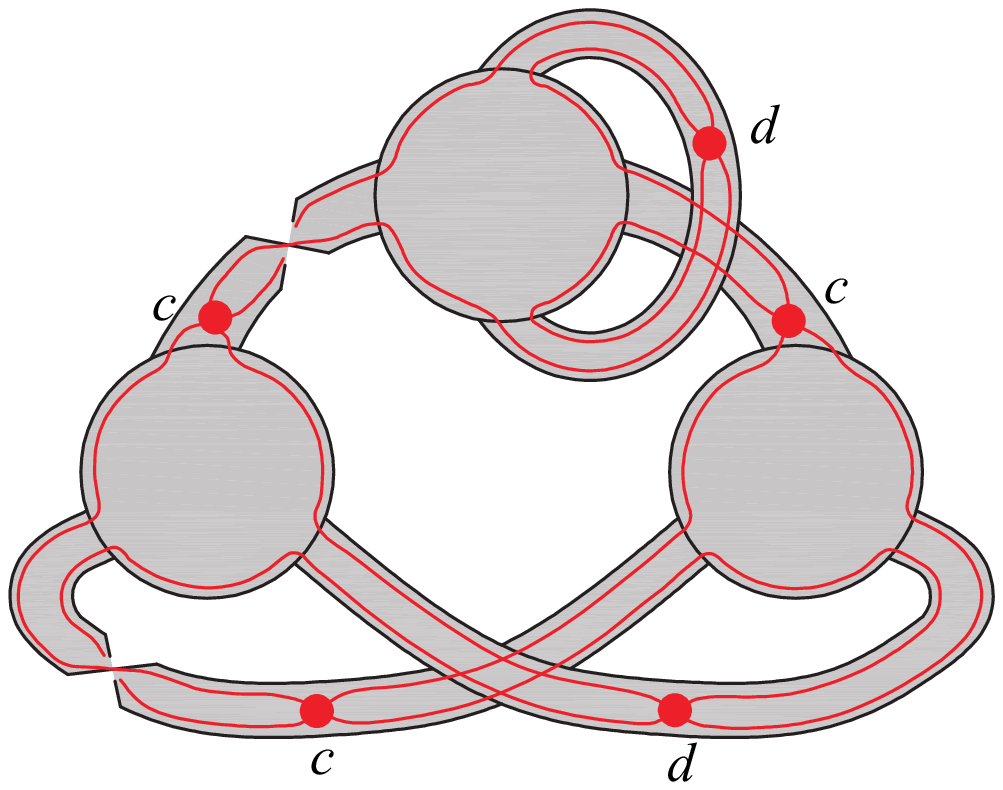}
\centerline{(b)}
\end{minipage}

\begin{minipage}{7cm}
\includegraphics[width=2.6in]{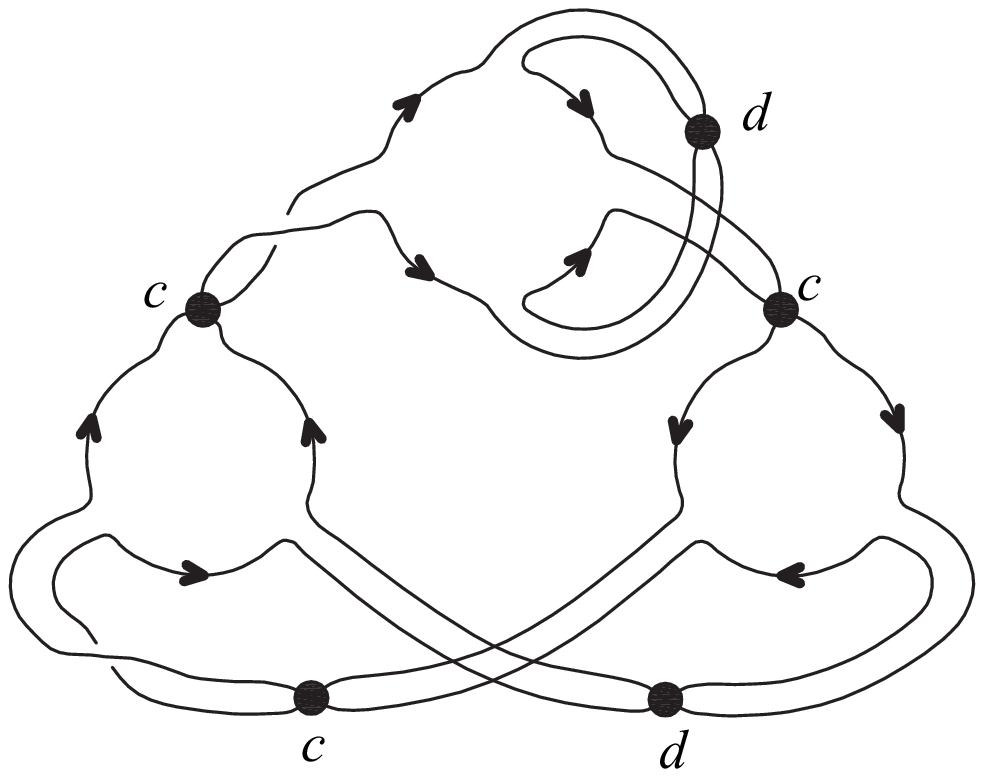}
\centerline{(c)}
\end{minipage}\begin{minipage}{7cm}
\includegraphics[width=2.6in]{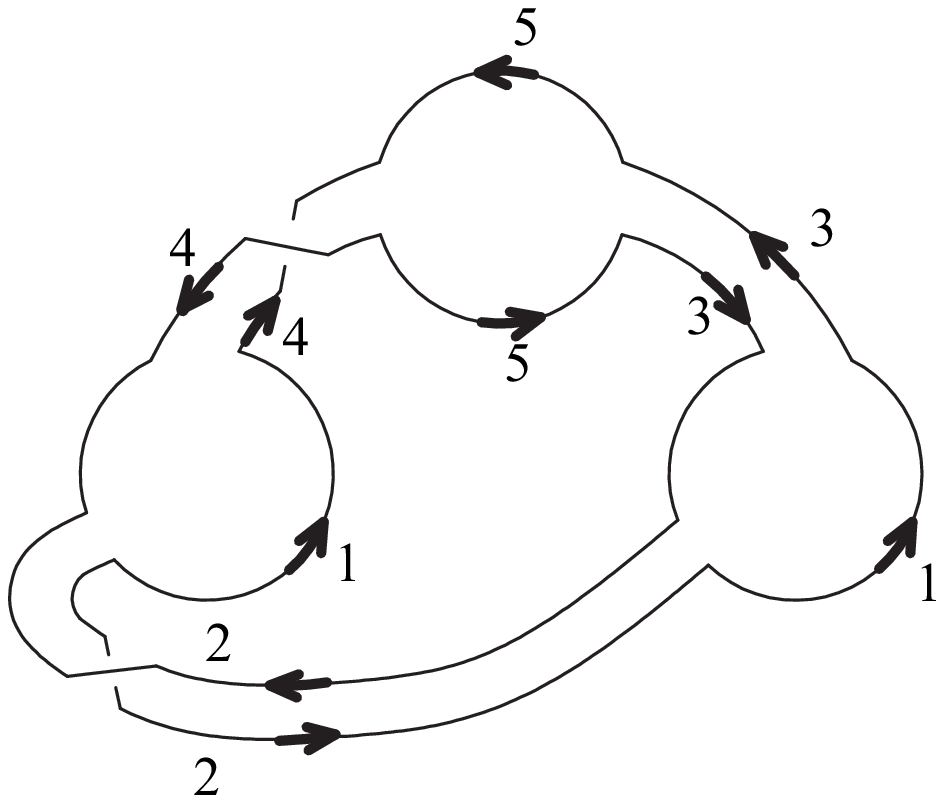}
\centerline{(d)}
\end{minipage}
\caption{A counterexample: (a) a non-orientable ribbon graph $G$, (b) its medial graph $G_m$ (embedded in $G$) with $t(G_m)=1$, (c) its unique $\{c,d\}-$partitions of $V(G_m)$ and (d) arrow presentation of $G^{\{2,3,4\}}$.}\label{cx}
\end{figure}

\end{remark}

\begin{Lemma}\label{l2}
Let $G$ be a ribbon graph and $A\subset E(G)$. Then $G^A$ is Eulerian
if and only if $S_A$ of $G$ associated with $A$ is even.
\end{Lemma}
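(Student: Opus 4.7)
The plan is to deduce this directly from Lemma \ref{l1}, once that bijection is refined to preserve cycle structure. After performing the smoothing of $G_m$ prescribed by $S_A$ (white at $v(e)$ for $e\in A$, black for $e\notin A$), each vertex $v(e)$ contributes two degree-2 vertices lying on the state circles, and the state circles themselves, viewed as simple closed curves in the surface carrying $G$, are isotopic to the boundary components of the spanning ribbon subgraph $(V(G),A)$. By Definition \ref{dpd}, these boundary components are exactly the circles of the arrow presentation of $G^A$. Comparing Figure \ref{cc} to Definition \ref{dpd} term by term shows that the two degree-2 vertices produced at $v(e)$ correspond to the two marking arrows labelled $e$, and lie on the circle of the arrow presentation matched with the state circle(s) they sit on. This is the cycle-respecting refinement of Lemma \ref{l1} that I will use.

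Granting this, identify each circle $c$ of the arrow presentation of $G^A$ with the boundary of its unique vertex disc $v_c$ in $G^A$. The number of marking arrows on $c$ is equal to $d_{G^A}(v_c)$: every edge of $G^A$ with an endpoint at $v_c$ contributes one marking arrow on $c$ per such endpoint (so a non-loop contributes one, a loop two), which matches the usual half-edge count defining the degree. Chaining the two identifications, the length of each state circle $C$ of $S_A$, that is, its number of degree-2 vertices, equals the degree of the corresponding vertex of $G^A$.

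The lemma now follows: $G^A$ is Eulerian if and only if every vertex of $G^A$ has even degree, if and only if every state circle of $S_A$ has even length, i.e.\ $S_A\in\mathscr{ES}$. The only subtlety I anticipate is in making the cycle-level upgrade of Lemma \ref{l1} precise, since as stated Lemma \ref{l1} records only the set-level correspondence between degree-2 vertices and marking arrows; once that refinement is written down, the rest reduces to a tautological count.
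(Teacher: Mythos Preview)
Your argument is correct and follows essentially the same route as the paper's proof: identify the circles of the arrow presentation of $G^A$ with the state circles of $S_A$, observe that the number of marking arrows on a circle equals the degree of the corresponding vertex of $G^A$, and conclude. The paper's version is terser and simply invokes Lemma~\ref{l1} for the equality of lengths, whereas you spell out the cycle-level refinement that this invocation tacitly relies on; your extra care is justified and fills a small expository gap, but the underlying strategy is identical.
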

\begin{proof}
The degree of a vertex of $G^{A}$ is equal to the number of marking arrows of the corresponding circle of arrow presentation of $G^A$. By Lemma \ref{l1}, it is exactly the length of corresponding state circle of $S_A$ of $G$. Thus $G^A$ is Eulerian if and only if $S_A$ is even.
\end{proof}

We now state and prove our second main but simple result, a characterization
of Eulerian partial duals of a ribbon graph in terms of crossing-total directions of its medial graph.

\begin{thm}\label{main1}
Let $G$ be a ribbon graph and $A\subset E(G)$. Then $G^{A}$ is Eulerian
if and only if $A$ is the union of the set of all $d$-edges and the set of some $t$-edges arising from a crossing-total direction
of $G_m$.
\end{thm}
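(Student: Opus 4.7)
The plan is to use Lemma \ref{l2} to reduce the Eulerian condition on $G^A$ to the statement that the state $S_A$ of $G$ associated with $A$ is even, and then match even states with crossing-total directions of $G_m$ via the recipe already used in the proof of Lemma \ref{map}. Recall that $S_A$ places white smoothings at the vertices $v(e)$ with $e\in A$ and black smoothings at the remaining vertices of $G_m$.

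For the backward implication, suppose $A$ is the union of all $d$-edges and some $t$-edges arising from a crossing-total direction $\mathcal{O}$ of $G_m$. Then $S_A$ puts white smoothings at exactly the $d$-vertices and the selected $t$-vertices, and black smoothings at the $c$-vertices together with the remaining $t$-vertices. This is precisely the state $S_{\mathcal{O}}$ constructed in the proof of Lemma \ref{map}, where it is shown to be even. Lemma \ref{l2} then gives that $G^A$ is Eulerian.

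For the forward implication, assume $G^A$ is Eulerian, so $S_A$ is an even state by Lemma \ref{l2}. I construct the desired direction $\mathcal{O}$ as follows. The state circles of $S_A$ partition the edges of $G_m$, and each has even length. On each state circle I fix a direction of traversal and orient its edges alternately with and against that direction; evenness of the length guarantees consistency when the traversal closes up. To verify that $\mathcal{O}$ is crossing-total, pick any $v(e)\in V(G_m)$: the smoothing of $S_A$ groups the four incident edges into two pairs of consecutive edges, and for each pair the alternating rule forces both edges to point the same way at $v(e)$ (both in or both out). Hence the cyclic pattern at $v(e)$ is ``in,in,out,out'', ``in,in,in,in'', or ``out,out,out,out'', as required.

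It remains to match the $\{c,d,t\}$-labels of $V(G_m)$ under $\mathcal{O}$ with the set $A$. At a crossing-type vertex $v(e)$, the orientation $\mathcal{O}$ was built so that the smoothing prescribed by $S_A$ is the ``matching'' one, i.e.\ the smoothing whose pairs receive uniform in/out orientations; by the convention underlying Figure \ref{cdt} (the very convention used in the proof of Lemma \ref{map}), this forces $v(e)$ to be a $d$-vertex when $e\in A$ (white smoothing) and a $c$-vertex when $e\notin A$ (black smoothing). At a total-type vertex both smoothings are trivially uniform, so $v(e)$ is a $t$-vertex regardless of whether $e\in A$. Hence $A$ contains every $d$-edge, no $c$-edge, and an arbitrary subset of the $t$-edges, which is exactly the claimed characterisation. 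I expect the main obstacle to be this last step: pinning down the local correspondence between the white/black smoothing dictated by $A$, the alternating orientation $\mathcal{O}$, and the $\{c,d,t\}$-label at a crossing vertex using the canonical checkerboard colouring, which requires a careful case analysis around a single vertex of $G_m$.
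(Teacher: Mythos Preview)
Your proposal is correct and follows essentially the same route as the paper: both directions pass through Lemma~\ref{l2} to reduce Eulerianness of $G^A$ to evenness of $S_A$, and both exploit the alternating orientation on state circles (as in the proof of Lemma~\ref{map}) to move between even states and crossing-total directions of $G_m$. Your account is actually more detailed than the paper's, which handles the $\{c,d,t\}$-label matching in the forward direction with a single ``it is easy to see''; the local case analysis you flag as the main obstacle is precisely what the paper leaves implicit.
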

\begin{proof}
($\Longrightarrow$) By Lemma \ref{l2}, $S_A$ is even. So we can give an alternating (clockwise and counter-clockwise) orientation to each circle of $S_A$. Transferring them to $G_m$, we obtain a crossing-total direction of $G_m$. It is easy to see that each edge of $A$ is either a $d$-edge or a $t$-edge and each edge of $A^c$ is either a $c$-edge or a $t$-edge. Thus $A$ is the union of the set of all $d$-edges and some $t$-edges arising from a crossing-total direction
of $G_m$.

($\Longleftarrow$) Conversely, if $A$ is the union of the set of all $d$-edges and some $t$-edges arising from a crossing-total direction
of $G_m$. Then $S_A$ is an even state of $G$ since the edge orientations of each state circle of $S_A$ are alternating between clockwise and counterclockwise orientation. Thus $G^{A}$ is Eulerian by Lemma \ref{l2}.
\end{proof}

Using Theorem \ref{main} and Lemma \ref{pro}, we obtain:

\begin{cor}\label{main2}
Let $G$ be a ribbon graph and $A\subset E(G)$. Then $G^{A}$ is even-face graph
if and only if $A$ is the union of the set of all $c$-edges and the set of some $t$-edges arising from a crossing-total direction
of $G_m$.
\end{cor}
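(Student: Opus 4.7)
The plan is to derive Corollary \ref{main2} from Theorem \ref{main1} by converting the even-face condition on $G^A$ into an Eulerian condition and then passing to the geometric dual. Two tools from earlier in the paper do all the work: Theorem \ref{main} performs the even-face/Eulerian swap, and Lemma \ref{pro} translates the $\{c,d,t\}$-classification between $G$ and $G^*$ under a fixed crossing-total direction of $G_m$.

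First I would invoke Theorem \ref{main} to rewrite the condition: $G^A$ is an even-face ribbon graph if and only if $G^{A^c}$ is Eulerian. Using Theorem \ref{pro0}(2)--(3), one has $G^{A^c} = G^{E(G)\triangle A} = (G^{E(G)})^A = (G^*)^A$, so the task reduces to describing those $A \subseteq E(G)$ for which $(G^*)^A$ is Eulerian. I would then apply Theorem \ref{main1} to the ribbon graph $G^*$ with subset $A$: $(G^*)^A$ is Eulerian if and only if $A$ is the union of all $d$-edges of $G^*$ and some $t$-edges of $G^*$ arising from a crossing-total direction of $(G^*)_m$.

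The final step is a cosmetic translation. Since $(G^*)_m = G_m$ as $4$-regular embedded graphs (the canonical checkerboard colourings differ only by swapping black and white, as noted in the paragraph preceding Lemma \ref{pro}), every crossing-total direction of $(G^*)_m$ is literally a crossing-total direction of $G_m$. By Lemma \ref{pro}, under any fixed such direction the $d$-edges of $G^*$ coincide with the $c$-edges of $G$ and the $t$-edges of $G^*$ coincide with the $t$-edges of $G$. Substituting yields precisely the claimed characterization. I anticipate no serious obstacle: the argument is a short chain of substitutions, and the only bookkeeping care needed is matching the colour swap in the canonical checkerboard colourings of $G_m$ and $(G^*)_m$ so that the $\{c,d,t\}$-labellings line up correctly.
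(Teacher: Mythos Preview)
Your proposal is correct and follows exactly the route the paper indicates: the paper's proof is simply the line ``Using Theorem \ref{main} and Lemma \ref{pro}, we obtain,'' and your argument unpacks this by passing from $G^A$ even-face to $G^{A^c}=(G^*)^A$ Eulerian via Theorem \ref{main}, applying Theorem \ref{main1} to $G^*$, and then using $(G^*)_m=G_m$ together with Lemma \ref{pro} to translate the $\{c,d,t\}$-labels back to $G$. There is nothing to add; the bookkeeping you flag about the colour swap is precisely what Lemma \ref{pro} handles.
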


\section*{Acknowledgements}
\noindent

This work is supported by NSFC (No. 11671336) and President's Funds of Xiamen University (No. 20720160011).

\section*{References}

\end{document}